\documentclass[11pt]{article}

\usepackage{amsthm,amsfonts,amssymb,amsmath,color}
\usepackage{bm}
\usepackage{bbm}
\usepackage[utf8x]{inputenc}
\usepackage{enumerate}
\usepackage[english]{babel}
\usepackage{esint}
\usepackage[mathscr]{eucal}
\usepackage{enumitem}
\usepackage[numbers,sort&compress]{natbib}
\usepackage{hyperref}
\hypersetup{
	colorlinks = true, %Colours the links instead of ugly red boxes
	urlcolor = blue, %Colour for external hyperlinks
	linkcolor = blue, %Colour of internal links
	citecolor = blue %Color of cites
}

\usepackage{url}

\usepackage{layout}
\allowdisplaybreaks

\numberwithin{equation}{section}

\renewcommand\a{\alpha}
\renewcommand\b{\beta}

\newcommand\s{\sigma}
\renewcommand\t{\tau}
\newcommand\R{\mathbb R}

\def\t{\tau}
\def\O{\Omega}

\def\l{\lambda}

\def\bvp{\bm{\varphi}}

\def\e{\varepsilon}

\newcommand\br{\begin{rem}}
	\newcommand\er{\end{rem}}
\newcommand\bp{\begin{pmatrix}}
	\newcommand\ep{\end{pmatrix}}
\newcommand\be{\begin{equation}}
	\newcommand\ee{\end{equation}}
\newcommand\ba{\begin{equation}\begin{aligned}}
		\newcommand\ea{\end{aligned}\end{equation}}

\newcommand\nn{\nonumber}

\usepackage{geometry}
\newcommand{\GG}{{\mathbb G}}

\newcommand{\HH}{{\mathbb H}}

\newcommand{\KK}{{\mathbb K}}

\newcommand{\supp}{{\rm supp }}

\newcommand{\uu}{{\mathbf u}}

\newcommand{\ww}{{\mathbf w}}

\newcommand{\vu}{\vc{u}}

\newcommand{\vc}[1]{{\bf #1}}

\newcommand{\Grad}{\nabla_x}

\newcommand{\dx}{\, {\rm d} {x}}

\newcommand{\dive}{{\rm div}}

\let\pa=\partial
\newcommand{\dd}{{\mathrm{d}}}
\newtheorem{defi}{Definition}[section]
\newtheorem{theorem}[defi]{Theorem}
\newtheorem{proposition}[defi]{Proposition}
\newtheorem{lemma}{Lemma}
\newtheorem{corollary}[defi]{Corollary}
\newtheorem{remark}{Remark} 

\newtheorem{assumption}[defi]{Assumption}

\newcommand{\calB}{\mathcal{B}}
\renewcommand{\l}{\langle}
\renewcommand{\r}{\rangle}

\begin{document}
	
	\title{Homogenization of the Navier--Stokes--Cahn--Hilliard system in the small-hole regime}
	
	\author{Yong Lu\footnote{School of Mathematics, Nanjing University, Nanjing 210093, China, luyong@nju.edu.cn}\and 
	Jiaojiao Pan\footnote{ Institute of Applied Physics and Computational Mathematics, Beijing 100088, China, panjiaojiao.math@gmail.com}\and 
	Luqi Wang\footnote{School of Mathematics, Nanjing University, Nanjing 210093, China, wangluqi@nju.edu.cn}}
	
	\date{}
	
	\maketitle
	
	\renewcommand{\refname}{References}
	
	\begin{abstract}
		This paper investigates the homogenization of the 3D Navier--Stokes--Cahn--Hilliard (NSCH) system in domains containing a large number of solid obstacles (named holes). Each hole has diameter of order $\varepsilon^{\alpha} \ (\alpha>3)$, where $\varepsilon > 0$ denotes the small length scale for inter-hole separation. Both viscosity and mobility depend on the phase-field variable. We establish two distinct asymptotic regimes: if the capillary strength $\lambda_\e\to \lambda>0$ as $\varepsilon\to 0$, the limit system coincides with the original NSCH system; if $\lambda_\e\to 0$ as $\varepsilon\to 0$, the scaled velocity, phase field and chemical potential converge to a weak solution to a  Stokes--Cahn--Hilliard (SCH) system. To the best of our knowledge, this work constitutes the first rigorous homogenization analysis for evolutionary NSCH flows with phase-dependent viscosity and mobility under the subcritical hole scaling.
	\end{abstract}
	
	{\bf Keywords.} Homogenization; Navier--Stokes--Cahn--Hilliard system; Perforated domains.
	\par{\bf Mathematics Subject Classification.} 35B27, 35Q35, 76M50, 76T06

	%%%%%%%%%%%%%%%%%%%%%%%%%%%%%%%%%%%%%%%%%%%%%%%%%%%%%%%%%%%%%%%%%%%%%%%%%%%%%%%%%%%%%%%%%%
	
	\section{Introduction}\label{INTRODUCTION}
	Diffuse-interface (phase-field) models provide a rigorous framework for interfacial dynamics in multiphase flows. Unlike classical sharp-interface two-phase models requiring explicit surface tracking, the phase-field approach introduces an order parameter $\phi\in[-1,1]$ to smoothly distinguish two immiscible fluids across a thin interfacial transition layer, with $\phi=\pm 1$ for each phase. The temporal variation of order parameter follows the Cahn--Hilliard equation derived from free-energy functionals. The mass diffusion flux in this fourth-order evolution law is controlled by a phase-dependent mobility function $M(\phi)$, which describes the transport rate of the order parameter across fluid interfaces. 

When combined with the incompressible Navier--Stokes equations governing fluid motions, the phase-field model yields the thermodynamically consistent Navier--Stokes--Cahn--Hilliard (NSCH) system \cite{Abe-09}. 
Derived from the classic Model H proposed by Hohenberg and Halperin \cite{HH77}, this coupled framework bridges macroscopic hydrodynamic behaviors and microscopic phase separation.  
Abels, Garcke and Gr\"un \cite{AGG-12} systematically constructed its extensions with phase-dependent viscosity and variable mobility. The fundamental theories on global weak solutions and long-time dynamics of NSCH systems were developed in \cite{Abe-09,GG-10}, which form the theoretical basis of this work.

Many researchers investigated various variants of the NSCH system, including compressible and incompressible flows, fluids with variable densities, anisotropic interfacial energies, or degenerate mobility. Readers may refer to works \cite{Abe-09, Abe-09-long, Abe-09-CMP,  ADG13, AF08, AGG-12, Boy-99, GG-2D, GG-10, GGW-19} and the bibliographies therein for further research.
Most previous studies on the NSCH system have focused on the existence of weak solutions and long-time dynamics, whereas homogenization problems have received comparatively little attention.

Homogenization problems aim to find the effective macroscopic flow model for fluids evolving in domains containing a large number of solid obstacles or holes. As the number of holes tends to infinity while their characteristic size tends to zero, the effective large-scale flow model is called homogenized system. 	
The first mathematical work on this topic was done by Tartar \cite{Tartar1}. He studied the steady incompressible Stokes equations, where hole size matches the distance between holes, and derived Darcy's law.
Later, Allaire \cite{ALL-NS1, ALL-NS2} carried out a complete analysis for steady Stokes and Navier--Stokes equations in $\mathbb{R}^d \ (d\geq 2)$, and proved that the homogenized system depends entirely on the ratio $\s_{\e}$, defined by
\ba \label{sigma}
\sigma _\e:= \left(\frac{\e^d}{a_\e^{d-2}}\right)^{\frac{1}{2}},  \ d \geqslant 3;\quad{\sigma _\e}:= \e \left| \log \frac{{a_\e }}{\e} \right|^{\frac{1}{2}}, \ d = 2,
\ea
where $\e > 0$ stands for the distance between neighboring holes, and $a_\e > 0$ represents radius of each hole. The asymptotic behavior is divided into three cases based on the limit of $\s_{\e}$ as $\e\to 0$:
\begin{itemize}
\item Supercritical case: $\lim_{\e\to 0}\s_{\e}=0$. Holes are quite large and fluid velocity vanishes in the original scaling. Darcy’s law appears under proper rescaling.
\item Subcritical case: $\lim_{\e\to 0}\s_{\e}=\infty$. Holes are very tiny and the limit system is the same as the original one.
\item Critical case: $\lim_{\e\to 0}\s_{\e}=\s_{*}\in (0,+\infty)$. The holes induce an extra friction term in asymptotic limit, known as Brinkman's law.
\end{itemize}
The first author proposed a unified approach to all three cases above in \cite{L}.
	
	Homogenization of different types of single-phase fluid flows in perforated domains has been studied during the last decades. For compressible fluids, Masmoudi \cite{Mas-Hom} established the first rigorous homogenization result under the scaling where hole size matches inter-hole separation, and derived Darcy’s law.
H\"{o}fer, Ne\v{c}asov\'a and Oschmann \cite{HNO25} proved a quantitative version of this convergence result.
	A series of studies \cite{BO23,DFL,FL1,Lu-Schwarz18, OschmannPokorny2023,NO,NP} treated the subcritical case, and showed that the limit system stays identical to the original one.
	For the evolutionary incompressible Navier--Stokes system, Mikeli\'{c} \cite{Mik91} considered the case where the diameter of holes is proportional to the mutual distance, and obtained Darcy's law. Subsequent contributions \cite{FeNaNe, LY} covered the remaining scaling regimes. The homogenization of the inhomogeneous incompressible Navier--Stokes system can be found in \cite{LuPanYang2025,Pan2025,BOP26}.
		
	Moreover, extensive research addresses homogenization problems for complex fluid models in perforated domains. Feireisl, Novotn\'y and Takahashi \cite{FNT-Hom} investigated the supercritical case of the full Navier--Stokes--Fourier system; the subcritical case of stationary compressible Navier--Stokes--Fourier equations was treated in \cite{Lu-Pokorny}. As for inhomogeneous incompressible heat conducting fluid,  Feireisl, the first author and Sun \cite{FLS} analyzed the critical scaling regime in three-dimensional bounded domains and proved that the limit system obeys Brinkman’s law.
More recently, in \cite{LY-Qian, HLO2025}, with collaborators the first author considered evolutionary incompressible Carreau-type non-Newtonian viscous flows, and derived Darcy's law for supercritical perforations. 
	
         Despite abundant results for single-phase fluid homogenization in perforated domains, phase-field coupled multiphase flows remain largely unexplored in porous geometries. For stationary Stokes--Cahn--Hilliard (SCH) system, rigorous upscaling results were established in \cite{LM}, while formal asymptotic expansions for low-Reynolds two-phase porous flows can be found in \cite{DR15}. 
        For NSCH system, Metzger and Knabner \cite{MetKna} used asymptotic expansion and phase-field approach to derive Darcy' law.
        Recently, Chakrabortty, Dutta and Mahato \cite{CDM} used the periodic unfolding operator to derive the two-scale limit system, where the diameter of holes is proportional to mutual distance. No prior rigorous work addresses the evolutionary NSCH system with phase-dependent viscosity and mobility under the subcritical hole scaling. 
         
          In this paper, we investigate the homogenization of an incompressible NSCH system in perforated domains in $\mathbb{R}^{3}$ with very tiny holes, whose diameter is of order $\e^{\a} \ (\a>3)$, and the mutual distance is of order $\e$, corresponding to the subcritical case. We only consider non-degenerate mobility case, consistent with the standard analytical setup in \cite{Abe-09, GG-10}. The homogenization of NSCH system with degenerate mobility of the type studied in \cite{ADG13} remains open.
          
          In contrast to stationary SCH models, the evolutionary NSCH system introduces the essential analytical difficulties:  since the convective term $\dive(\vu_\e \otimes \vu_\e)$ and some terms involving $\phi_\e$ are nonlinear, standard weak convergence is insufficient to pass these nonlinear terms to the homogenized limit. To overcome this difficulty, we establish weak convergence of convective term and strong convergence of the extended phase-field variable $\widetilde\phi_\e$, via refined compactness arguments. Two distinct asymptotic regimes are distinguished based on capillary strength:
          
        \begin{itemize}
        \item If  $\lambda_\e\to \lambda>0$ as $\varepsilon\to 0$, then the limit system coincides with the original NSCH system. 
        \item If $\lambda_\e\to 0$ as $\varepsilon\to 0$, then the asymptotic limit behavior of scaled velocity, phase-field variable and chemical potential is governed by a Stokes--Cahn--Hilliard (SCH) system.
           \end{itemize}

	\paragraph{{\bf Organization of the paper:}} The paper is organized as follows. In Section~\ref{INTRODUCTION}, we introduce the problem under consideration, give the definition of weak solution, present several lemmas and basic inequalities, and formulate the main results. 
	Section~\ref{sec:unifbds} is devoted to obtaining uniform bounds on the velocity, phase-field variable and chemical potential. 
	In Section~\ref{Equation in homogenized domain}, we establish the momentum equations, phase-field equation and chemical-potential equation in homogenized domain.
	To pass $\e\to 0$, we study the convergence of nonlinear convective term in Section~\ref{Convergence of the nonlinear convective term} and the strong convergence of $\widetilde\phi_\e$ in Section~\ref{Strong convergence of phi}, via Aubin--Lions type compactness arguments. Then in Section~\ref{Homogenization process}, we pass $\e\to 0$ to derive our main results. At the end, we study the vanishing capillary parameter regime in Section~\ref{lambda0}.
	
		\paragraph{{\bf Notations:}} In this paper, we write $\nabla=\Grad$, $\dive=\dive_x$, $\Delta=\Delta_x$, and use standard notations $L^p$ and $W^{k,p}$ for Lebesgue and Sobolev spaces, respectively. Specially, we denote $H^k=W^{k,2}$.
Moreover, for a domain $D \subset \R^3$, we use $L^{q}_0(D)$ to denote the space of functions in $L^q(D)$ with zero integral mean:
	\be
	L^q_0(D):=\left\{f\in L^q(D): \, \int_{D} f\,\dx=0\right\}. \nn
	\ee
We use $W^{1,q}_0$ to denote functions in $W^{1,q}$ with zero trace. 
For vector-valued fields, we introduce solenoidal subspaces of $L^q$ and $W^{1,q}$ as follows:
\ba\nn
&L^q_{\sigma}(D;\mathbb{R}^3):=\left\{\bm{f}\in L^q(D;\mathbb{R}^3): \, \dive \bm{f}=0  \right\}, \\
&W^{1,q}_{\sigma}(D;\mathbb{R}^3):=\left\{\bm{f}\in W^{1,q}(D;\mathbb{R}^3): \, \dive \bm{f}=0  \right\}. 
\ea
The dual spaces of $W^{1,q}$ and $W^{1,q}_\sigma$ are denoted by $W^{-1,q'}$ and $W^{-1,q'}_\sigma$, respectively.
We write $\|\cdot\|_{L^p L^q}=\|\cdot\|_{L^p(0,T;L^q(D))}$ and $\|\cdot\|_{L^p W^{k,q}}=\|\cdot\|_{L^p(0,T;W^{k,q}(D))}$ without further notice. We omit the time interval $(0,T)$, spatial domain $D=\Omega_\e$ or $\Omega$, and time/spatial variable when there is no confusion. The inner product of two vectors $\mathbf a,\mathbf b \in \R^{3}$ is denoted by $\mathbf a\cdot\mathbf b$; the inner product of two matrices $A,B \in \R^{3 \times 3}$ is defined as $A:B = \sum_{i, j=1}^3 A_{ij} B_{ij}$. 

Without loss of generality, we use $C$ to denote a positive constant independent of $\e$.
All weak convergences hold up to a subsequence, and we omit explicit mention of this subsequence for brevity.
		
\subsection{Problem formulation}\label{Problem formulation}
In this paper, we focus on the three-dimensional case in \eqref{sigma} and take 
\be\nn
a_{\e}=\e^{\a} \quad\mbox{with}\quad \a>3,
\ee
which yields $\s_{\e}=\e^{\frac{3-\a}{2}} \to \infty$ as $\e \to 0$. Let $\O \subset \R^3$ be a bounded domain of class $C^{2,\b}$ with $0<\b<1$. We define a family of perforated domains $\{ \O_\e \}_{\e>0}$ as follows: 
	\begin{align}\label{defi-holes-1}
	\O_\e = \O \setminus \bigcup_{k\in K_\e} T_{\e, k},\ && K_\e:=\{k\in \mathbb{Z}^3:\ \e \overline{Q}_{k} \subset \O \}, && Q_{k} := \Big( -\frac{1}{2},\frac{1}{2} \Big)^3 + k,\ \ k\in \mathbb{Z}^3.
	\end{align}
	Each set $T_{\e,k}$ stands for a single hole (or obstacle). We impose the following assumption on the hole geometry:
	\be\nn
	T_{\e,k} := x_{\e,k} + a_\e T_0 \subset\subset B(x_{\e,k}, b_0a_{\e} ) \subset\subset B(x_{\e,k}, b_1 a_{\e} ) \subset\subset \e Q_{k} \subset \O, 
	\ee
	with
	\be\nn
	|x_{\e, k} - x_{\e, l}| \geq 2\e,\quad \ \forall \ k \neq l \in K_\e. 
	\ee
	Here, $ T_0 \subset \R^3$ is a bounded, simply connected domain of class $C^{2,\b}$ contained in $Q_{0}$. The notation $B(x,r)$ denotes the open ball centered at $x$ with radius $r$, $b_0$ and $b_1$ are positive constants. 	Each $T_{\e,k}$ has diameter of order $\mathcal{O}(a_\e)$ and the minimal distance between distinct hole centers is of order $\mathcal{O}(\e)$. 
	The number of holes contained in $\O$ has the bound
	\be\label{defi-holes-2}
	|K_\e| \leq C(\e^{-3} |\Omega| + o(1)),\quad \text{for some} ~C>0~ \text{independent of} ~ \e.
	\ee
	
We study a diffuse-interface model for a binary incompressible mixture in the perforated domain $\O_{\e} \subset \R^3$.
The velocity $\vu_\e$, the hydrodynamic pressure $p_\e$, the phase-field variable (order parameter) $\phi_\e$ and the chemical potential $\mu_\e$ satisfy the following NSCH system in $(0,T) \times \O_{\e}$:
\ba\label{NSCH}
	\begin{cases}		
		\partial_t  \vu_\e  +\dive(\vu_\e\otimes \vu_\e)
		-\dive(\nu(\phi_\e) D(\vu_\e)) + \nabla p_\e +\lambda_\e \phi_\e \nabla \mu_\e=\mathbf g_\e,  &\text{in } (0,T) \times \O_{\e},\\
		 \dive \vu_{\e} = 0, &\text{in } (0,T) \times \O_{\e},\\
		\partial_t \phi_\e +\vu_\e\cdot\nabla\phi_\e-\dive( M(\phi_\e)\nabla\mu_\e)
		=0 , & \text{in } (0,T) \times \O_{\e},\\
		\mu_\e=-\Delta \phi_\e+F'(\phi_\e) ,& \text{in } (0,T) \times \O_{\e},\\
		\vu_{\e}(0,x) = \vu_{\e}^{0}(x),\quad \phi_\e(0,x)=\phi_\e^0(x) & \text{in } \O_\e.
	\end{cases}
	\ea
Here, the rate-of-strain tensor $D(\uu_\e)$ is defined by $D(\uu_\e)=\frac{1}{2}(\nabla \uu_\e+\nabla^{\textup{T}} \uu_\e)$. 
The fluid is contained in a bounded domain $\Omega_\e$, on the boundary of which the velocity, phase-field variable and chemical potential obey the boundary condition
\begin{equation} \label{boundary condition}
\vu_{\e} = \mathbf{0}, \quad \mathbf{n_\e}\cdot \nabla\phi_\e=0, \quad 
\mathbf{n_\e}\cdot(M(\phi_\e) \nabla\mu_\e)=0 \quad \text{ on }\pa\Omega_\e.
\end{equation}
Here, $\mathbf{n_\e}$ denotes unit outward normal vector on $\pa\Omega_\e$. 
The viscosity function $\nu: \mathbb{R}\to\mathbb{R}^+$ is Lipschitz continuous and satisfies
\ba\label{bound nu}
0<\nu_1<\nu(s)<\nu_2, \quad s\in\mathbb{R} ,
\ea
where $\nu_1$ and $\nu_2$ are positive constants.
The Cahn--Hilliard equation contains a phase-dependent mobility function $M(\phi_\e)$ governing the diffusion flux of the order parameter, where $M: \mathbb{R}\to \mathbb{R}^+$ is Lipschitz continuous and satisfies
\ba\label{bound M}
0<M_1<M(s)<M_2, \quad s\in\mathbb{R},
\ea
for some positive constants $M_1$ and $M_2$.
The smooth double-well potential $F:\mathbb{R}\to \mathbb{R}$ is given by
\ba\label{Fs}
F(s)=\frac{(s^2-1)^2}{4}.
\ea
It is straightforward to check that
\ba\label{F bound}
\frac{1}{2}s^2-\frac{3}{4}\leq F(s)\leq \frac{1}{4}s^4+\frac{1}{4}, \quad s\in \mathbb{R},
\ea
and the derivative satisfies
\ba\nn
F'(s)=s^3-s,\quad
|F'(s)|\leq |s|^3+1, \quad s\in \mathbb{R}.
\ea

The uniform positive lower and upper bounds on $\nu$ and $M$ ensure uniform ellipticity of the viscous and diffusion operators, which is essential to derive a priori estimates. The explicit double-well potential $F$ corresponds to the standard Ginzburg--Landau free energy density for binary phase flows. 

We now impose the scaling assumptions governing the capillary strength, initial data and external force. These scaling assumptions enable us to derive uniform estimates and apply Aubin--Lions compactness arguments. Under proper scaling, we can distinguish two distinct asymptotic regimes depending on whether the limiting capillary strength vanishes or not.

\begin{assumption}\label{Assumption}
We impose the following assumptions.
\begin{itemize}
\item The capillary strength $\lambda_\e>0$ depends on $\e$ and satisfies the limit relation
\be\label{assumption-lambda}
 \lim_{\e\to 0} \lambda_\e=\lambda\in[0,+\infty).
\ee

\item The initial values satisfy
\ba\label{initial-u-phi}
\uu_\e^0=\sqrt{\lambda_\e} \uu_0,\quad \phi_\e^0=\phi_0 \quad \text{in }\Omega,
\ea
where $\uu_0\in L^2(\Omega;\mathbb R^3)$ and $\phi_0\in H^1(\Omega)$, $-1\leq \phi_0\leq 1$.

\item There exists $\mathbf g\in L^2((0,T)\times \Omega; \mathbb R^3)$ such that
\ba\label{assumption-g}
\mathbf g_\e=\sqrt{\lambda_\e}\mathbf g \quad \text{ in } (0,T)\times \Omega .
\ea
\end{itemize}
\end{assumption}
\begin{remark}
The assumptions of initial values \eqref{initial-u-phi} and external force \eqref{assumption-g} can be replaced by
\ba\nn
&\frac{\uu^0_\e}{\sqrt{\lambda_\e}}\to \uu_0 \text{ strongly in } L^2(\Omega; \mathbb{R}^3), \quad \phi_\e^0\to \phi_0  \text{ strongly in }H^1(\Omega),\\
&\frac{\mathbf g_\e}{\sqrt{\lambda_\e}}\to \mathbf g \text{ weakly in } L^2((0,T)\times \Omega; \mathbb{R}^3).
\ea
For the convenience of analysis, we adopt assumptions \eqref{initial-u-phi} and \eqref{assumption-g} .
\end{remark}

\subsection{Weak solutions}\label{Weak solutions}

Let us introduce the definition of finite energy weak solutions to NSCH system \eqref{NSCH}--\eqref{boundary condition}.

\begin{defi}[weak solution]\label{def weak solution}	
A triple $(\vu_{\e},\phi_\e,\mu_\e)$ is called a \textbf{(finite energy) weak solution} to problem \eqref{NSCH}--\eqref{boundary condition} in $(0,T) \times \O_{\e}$ provided:	
\begin{itemize}
				
\item the $(\vu_{\e},\phi_\e,\mu_\e)$ satisfies:
\ba \nn
&\vu_\e \in L^\infty(0,T; L^2_{\sigma}(\Omega_\e; \R^3)) \cap 
L^2(0,T; H^1_0(\Omega_\e; \R^3)), 
\pa_t \vu_\e\in L^{4\over3}(0,T; H^{-1}(\Omega_\e; \R^3)),\\
&\phi_\e \in L^\infty(0,T; H^1(\Omega_\e)), 
\pa_t \phi_\e\in L^2(0,T; H^{-1}(\Omega_\e)),\\
&\mu_\e \in L^2(0,T; H^1(\Omega_\e));
\ea

\item the momentum equation: for any ${\bm\varphi}\in C_c^\infty([0,T)\times \Omega_\e;\mathbb{R}^3)$ with $\dive {\bm\varphi}=0$, there holds
\ba\label{def-weak-momentum}
&-\int_0^T \int_{\Omega_\e} \uu_\e \cdot\pa_t \bm\varphi \dd x \dd t
 -\int_0^T\int_{\Omega_\e} \uu_\e\otimes\uu_\e:\nabla \bm\varphi \dd x \dd t
 +\int_0^T\int_{\Omega_\e} \nu (\phi_\e) D(\uu_\e):D(\bm\varphi)\dd x \dd t\\
&\quad
+ \int_0^T\int_{\Omega_\e}\lambda_\e \phi_\e \nabla \mu_\e\cdot \bm\varphi\dd x \dd t= \int_0^T\int_{\Omega_\e} \mathbf g_\e\cdot \bm\varphi \dd x \dd t +\int_{\Omega_\e}  \uu_\e^0 \cdot \bm\varphi(0) \dd x;
\ea

\item the phase-field equation: for any ${\varphi}\in C_c^\infty([0,T)\times \Omega_\e)$, 
\ba\label{def-weak-phase}
&-\int_0^T\int_{\Omega_\e} \phi_\e\pa_t\varphi   \dd x\dd t+
 \int_0^T\int_{\Omega_\e} M(\phi_\e)\nabla\mu_\e\cdot\nabla \varphi \dd x \dd t\\
&\quad= \int_0^T\int_{\Omega_\e}  \phi_\e \uu_\e\cdot\nabla \varphi \dd x \dd t
+\int_{\Omega_\e} \phi_\e^0\varphi(0)   \dd x;
\ea

\item the chemical-potential equation: for any $\psi\in C_c^\infty((0,T)\times \Omega_\e)$, 
\ba\label{def-weak-chemical}
\int_0^T\int_{ \Omega_\e}  \mu_\e \psi \dd x \dd t=
\int_0^T\int_{ \Omega_\e} \nabla\phi_\e \cdot \nabla\psi \dd x\dd t+
\int_0^T\int_{ \Omega_\e}  F'(\phi_\e) \psi \dd x\dd t;
\ea

\item for a.a. $t\in[0,T)$, there holds the energy inequality
\ba\label{energy inequality}
&\mathcal E_\e(\vu_\e(t),\phi_\e(t))+\int_{0}^{t}\int_{\Omega_\e}(  \nu(\phi_\e) |D(\vu_\e)|^2+\lambda_\e M(\phi_\e)|\nabla \mu_\e|^2 )\dd x\dd \t\\
&\quad\leq \mathcal E_\e(\vu_\e(0),\phi_\e(0))+\int_{0}^{t}\int_{\Omega_\e}  \mathbf g_\e \cdot \vu_\e\dd x \dd \t,
\ea
where
\ba\nn
\mathcal E_\e(\mathbf v,\psi)=\frac{1}{2}\|\mathbf v\|_{L^2(\O_\e)}^2+\frac{\lambda_\e}{2}\|\nabla \psi\|_{L^2(\O_\e)}^2+\lambda_\e\int_{\Omega_\e} F(\psi) \dd x.
\ea

\end{itemize}
\end{defi}	

\medskip

\begin{remark}
For any fixed $\e>0$, the existence of weak solution was shown by Abels \cite{Abe-09} for $\mathbf g_\e= \mathbf 0$ and Gal and Grasselli \cite{GG-10} for $\mathbf g_\e\in L^2(0,T;H^{-1}(\Omega;\mathbb R^3))$. In \cite{GG-10}, they also showed that
\ba\nn
&\vu_\e\in C([0,T];W^{-1,2}_\sigma(\Omega_\e) )\cap C_w([0,T];L^2_\sigma(\Omega_\e) ),\\
&\phi_\e\in L^2(0,T; H^3(\Omega_\e))\cap C([0,T]; L^2(\Omega_\e)).
\ea
Hence, the initial conditions make sense.
\end{remark}

\subsection{Preliminary}

In this section, we collect several lemmas and inequalities that will be frequently used throughout the subsequent analysis.

For $\phi_\e$ and $\mu_\e$, boundary condition in \eqref{boundary condition} implies that zero extension would destroy the Sobolev regularity. To resolve this issue, we introduce the following extension lemma (see Lemma 3 in \cite{CS79} and Lemma 3.1 in \cite{Lu-Pokorny}):
\begin{lemma}[Extension operator]\label{extension lemma}
There exists an extension operator $ E^\e: H^1(\Omega_\e)\to H^1(\Omega)$, such that
\be\nn
E^\e f=f \quad\text{in } \Omega_\e,
\ee
\be\nn
\|E^\e f\|_{H^1(\Omega)}\leq C\| f\|_{H^1(\Omega_\e)},
\ee
where $C$ is a constant independent of $\e$. 
\end{lemma}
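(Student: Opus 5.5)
The plan is the classical local-reflection construction of Cioranescu--Saint Jean Paulin, carried out hole by hole and made uniform in $\e$ by scaling. Everything is local. By the geometric assumptions, each hole satisfies $T_{\e,k}\subset\subset B(x_{\e,k},b_0a_\e)$, and the ball $B(x_{\e,k},b_1a_\e)$ lies inside $\e Q_k$. These balls are pairwise disjoint and contained in $\Omega$, so they do not touch $\partial\Omega$. It therefore suffices to build, for each $k\in K_\e$, an extension from the annulus-type set $A_{\e,k}:=B(x_{\e,k},b_1a_\e)\setminus \overline{T_{\e,k}}$ into $T_{\e,k}$, with bounds that do not depend on $\e$ or $k$. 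Outside $\bigcup_k T_{\e,k}$ we set $E^\e f=f$.

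First, work in the reference configuration $A_0:=B(0,b_1)\setminus\overline{T_0}$. Since $T_0$ is a bounded $C^{2,\b}$ (in particular Lipschitz) domain with $\overline{T_0}\subset B(0,b_0)$, the Stein/Calder\'on extension theorem gives a bounded linear operator $P_0:H^1(A_0)\to H^1(B(0,b_1))$ with $P_0 g=g$ on $A_0$. To control the gradient alone, we replace $g$ by $g-m(g)$, where $m(g)$ is the mean of $g$ over $A_0$, and define
\[
\tilde P_0 g := m(g) + P_0\big(g-m(g)\big).
\]
This operator reproduces constants. By the Poincar\'e--Wirtinger inequality on the connected Lipschitz set $A_0$,
\[
\|\nabla \tilde P_0 g\|_{L^2(B(0,b_1))}\le C\|\nabla g\|_{L^2(A_0)},\qquad \|\tilde P_0 g\|_{L^2(B(0,b_1))}\le C\|g\|_{H^1(A_0)}.
\]
Connectedness of $A_0$ holds because $T_0$ is simply connected and compactly contained in the ball.

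Second, transfer this to each hole by the affine map $y\mapsto x_{\e,k}+a_\e y$. For $f\in H^1(\Omega_\e)$, set $g(y)=f(x_{\e,k}+a_\e y)$ on $A_0$ and define $E^\e f:=(\tilde P_0 g)\big((x-x_{\e,k})/a_\e\big)$ on $T_{\e,k}$. Both sides of the gradient estimate carry the same Jacobian factor $a_\e^{3-2}=a_\e$, so the gradient estimate holds with the same constant $C$:
\[
\|\nabla E^\e f\|_{L^2(B(x_{\e,k},b_1a_\e))}\le C\|\nabla f\|_{L^2(A_{\e,k})}.
\]
For the $L^2$ part we use the mean-value form. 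Write $E^\e f-m_k = a_\e$-scaled $P_0(g-m(g))$, where $m_k$ is the mean of $f$ over $A_{\e,k}$. Then
\[
\|E^\e f-m_k\|_{L^2(B_{\e,k})}\le C a_\e\|\nabla f\|_{L^2(A_{\e,k})},\qquad \|m_k\|_{L^2(B_{\e,k})}\le C\|f\|_{L^2(A_{\e,k})},
\]
where $B_{\e,k}:=B(x_{\e,k},b_1a_\e)$ and the second bound uses $|B_{\e,k}|\le C|A_{\e,k}|$. Since $a_\e\le 1$, this gives $\|E^\e f\|_{L^2(B_{\e,k})}\le C\|f\|_{H^1(A_{\e,k})}$.

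Third, glue the pieces. The function $E^\e f$ agrees with $f$ on each $A_{\e,k}$ and belongs to $H^1$ of each ball. Hence it lies in $H^1(\Omega)$: the traces on $\partial T_{\e,k}$ match from both sides, because the extension is an $H^1$ function on the full ball that equals $f$ outside the hole. The sets $A_{\e,k}$ are disjoint, so summing the local estimates over $k$ and adding the trivial contribution from the region away from the holes gives
\[
\|E^\e f\|_{H^1(\Omega)}\le C\|f\|_{H^1(\Omega_\e)}.
\]
Linearity of $E^\e$ follows from the linearity of $P_0$ and of the mean.

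The only delicate point is uniformity in $\e$. A naive extension on the small scale would give an $L^2$ constant that depends on $a_\e$ in an unfavourable way. The mean-subtraction and Poincar\'e step above is exactly what removes this dependence, and it is where I would concentrate the care.
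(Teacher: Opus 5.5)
Your construction is the classical Cioranescu--Saint Jean Paulin argument, rescaled to the hole size $a_\e$ rather than the period $\e$. This is what the paper relies on by citing Lemma 3 of Cioranescu--Saint Jean Paulin and Lemma 3.1 of Lu--Pokorn\'y; the paper itself gives no proof. Most of your bookkeeping is right:
\begin{itemize}
\item both sides of the gradient bound carry the same factor $a_\e^{-1/2}$;
\item the mean term is controlled because $|B_{\e,k}|/|A_{\e,k}|=|B(0,b_1)|/|A_0|$;
\item the gluing and summation are fine, since the balls $B(x_{\e,k},b_1a_\e)$ are disjoint and $H^1$ is a local property.
\end{itemize}

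The step that fails as written is the connectedness of $A_0$. It is load-bearing, because Poincar\'e--Wirtinger needs it. In three dimensions, simple connectedness of $T_0$ does not imply that $B(0,b_1)\setminus\overline{T_0}$ is connected. Take the spherical shell $T_0=\{r_1<|y|<r_2\}$ with $r_2$ small: it is a smooth, bounded, simply connected domain, yet $A_0$ splits into the cavity $\{|y|<r_1\}$ and the outer shell.

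For such $T_0$ the lemma is actually false. Let $f=1$ on one trapped cavity $\{|x-x_{\e,k}|<r_1a_\e\}$ and $f=0$ on the rest of $\Omega_\e$. Then $\|f\|_{H^1(\Omega_\e)}\sim a_\e^{3/2}$. Any $H^1(\Omega)$ extension must go from $1$ to $0$ across the shell, so by the capacity of this condenser its gradient norm is at least of order $a_\e^{1/2}$.

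What you need is that $\R^3\setminus\overline{T_0}$ is connected, i.e.\ the obstacle encloses no cavity. This is implicit in calling $\Omega_\e$ a domain and in the cited references. Under that hypothesis $A_0$ is connected:
\begin{itemize}
\item The shell $\{b_0\le|y|<b_1\}$ is connected and lies in $A_0$, since $\overline{T_0}\subset B(0,b_0)$.
\item Any point of $A_0$ with $|y|\ge b_0$ already lies in this shell.
\item A point of $A_0$ inside $B(0,b_0)$ can be joined to infinity by a path in $\R^3\setminus\overline{T_0}$. The part of that path up to its first hit of $\partial B(0,b_0)$ stays in $A_0$ and ends in the shell.
\end{itemize}

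A minor point: your closing remark mislocates the difficulty. Applying $P_0$ directly, without subtracting the mean, only adds $Ca_\e\|\nabla f\|_{L^2(A_{\e,k})}$ to the $L^2$ bound, which is harmless. It is the gradient bound that would pick up the bad term $Ca_\e^{-1}\|f\|_{L^2(A_{\e,k})}$, and that is what the mean subtraction removes.
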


To derive the limit momentum equations, we recall the following lemma, which introduces a Bogovskii type operator (see Theorem 2.3 in \cite{DFL} and Proposition 2.2 in \cite{Lu-Schwarz18}):

\begin{lemma}[Bogovskii type operator]\label{lem-div}
Let $\Omega_\e$ be defined through \eqref{defi-holes-1}--\eqref{defi-holes-2} with $a_{\e} = \e^{\alpha}$, $\alpha\geq 1$. Then there exists a linear operator
\ba\nn
\calB_\e : L^{q}_0(\Omega_\e) \to W_0^{1,q}(\Omega_\e; \R^3),\ 1 < q < \infty,
\ea
such that for any $f\in L_0^{q}(\Omega_\e)$,
\ba\label{Bef}
\dive \calB_\e(f) =f \ \text{in} \ \Omega_\e,~~\|\calB_\e(f)\|_{W_0^{1,q}(\Omega_\e; \R^3)}\leq C\, \big(1+\e^{\frac{(3-q)\a-3}{q}}\big)\|f\|_{L^q(\Omega_\e)},
\ea
for some constant $C$ independent of $\e$.
\end{lemma}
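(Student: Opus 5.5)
The plan is to build $\calB_\e$ by gluing a Bogovskii operator on the unperforated domain $\Omega$ to local corrections near each hole. I will take the estimate for $\Omega$ itself as known: since $\Omega$ is bounded and $C^{2,\b}$, it is Lipschitz. Therefore the classical Bogovskii operator $\calB_\Omega: L^q_0(\Omega)\to W^{1,q}_0(\Omega;\R^3)$ exists for $1<q<\infty$, with $\dive \calB_\Omega g = g$ and $\|\calB_\Omega g\|_{W^{1,q}}\le C\|g\|_{L^q}$.

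Given $f\in L^q_0(\Omega_\e)$, extend it by zero to $\tilde f\in L^q_0(\Omega)$. Set $\mathbf w=\calB_\Omega\tilde f$, so that $\|\mathbf w\|_{W^{1,q}(\Omega)}\le C\|f\|_{L^q(\Omega_\e)}$. The field $\mathbf w$ does not vanish on the holes, so it has to be corrected near each hole $T_{\e,k}$.

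\textbf{Local correction.} For each $k\in K_\e$ take a cutoff $\chi_{k}\in C_c^\infty(B(x_{\e,k},b_1a_\e))$ with $\chi_k=1$ on $B(x_{\e,k},b_0a_\e)$ and $|\nabla\chi_k|\le C a_\e^{-1}$. On the annulus $A_k=B(x_{\e,k},b_1a_\e)\setminus B(x_{\e,k},b_0a_\e)$, subtract the part $\chi_k(\mathbf w-\langle \mathbf w\rangle_{A_k})$ and the constant part $\chi_k\langle\mathbf w\rangle_{A_k}$. Then restore the divergence with a Bogovskii operator on the annulus-type domain $B(x_{\e,k},b_1a_\e)\setminus T_{\e,k}$. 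By scaling, that operator has norm independent of $a_\e$, because the domain is a dilate of the fixed domain $B(0,b_1)\setminus T_0$. The needed compatibility condition holds: the divergence error $\nabla\chi_k\cdot(\mathbf w-\cdots)$ has zero mean once the flux $\int_{\partial T_{\e,k}}\mathbf w\cdot\mathbf n$ is accounted for, and that flux equals $-\int_{T_{\e,k}}\tilde f=0$.

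\textbf{Estimates.} The term $\nabla\chi_k\,(\mathbf w-\langle\mathbf w\rangle_{A_k})$ is bounded in $L^q$ by Poincaré on $A_k$, with constant $\sim a_\e$, times $\|\nabla\mathbf w\|_{L^q(A_k)}$. This part is harmless, costing only $O(1)$.

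The dangerous piece is $|\langle\mathbf w\rangle_{A_k}|\,\|\nabla\chi_k\|_{L^q}\sim |\langle\mathbf w\rangle_{A_k}|\, a_\e^{(3-q)/q}$. To control it, bound $\langle \mathbf w\rangle_{A_k}$ via the cell $\e Q_k$. Using the mean over $\e Q_k$ plus Poincaré/Sobolev on the cell, summing $|\langle \mathbf w\rangle|^q$ over cells gives
\[
\sum_k |\langle\mathbf w\rangle_{\e Q_k}|^q \le C\e^{-3}\|\mathbf w\|_{L^q(\Omega)}^q .
\]
The difference between the annulus mean and the cell mean is controlled through a Sobolev/Morrey-type estimate on the cell. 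The resulting total contribution is
\[
\Big(\sum_k a_\e^{3-q}|\langle\mathbf w\rangle|^q\Big)^{1/q}\lesssim \e^{-3/q}a_\e^{(3-q)/q}\|f\|_{L^q} = \e^{\frac{(3-q)\a-3}{q}}\|f\|_{L^q},
\]
which, together with the $O(1)$ terms, produces the factor $1+\e^{((3-q)\a-3)/q}$.

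The main obstacle I expect is the cell-to-annulus mean comparison in the regime $q<3$, where $W^{1,q}$ does not embed into $L^\infty$. I would handle it by working with $\|\mathbf w\|_{L^{q}}$ on $\e Q_k$ together with Hardy-type control of averages on small balls by cell averages plus $\e^{1-3/q}$-weighted gradient norms. I would then check carefully that every error is dominated by $1+\e^{((3-q)\a-3)/q}$. Since the local pieces have disjoint supports, which holds because $B(x_{\e,k},b_1a_\e)\subset\subset\e Q_k$, linearity and the zero boundary trace are immediate.
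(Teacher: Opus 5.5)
The paper does not prove this lemma. It quotes it from Theorem 2.3 of Diening--Feireisl--Lu and Proposition 2.2 of Lu--Schwarzacher. Your construction is essentially the restriction-operator argument behind that result: $\calB_\e f=\mathbf w-\sum_k(\chi_k\mathbf w-\mathbf z_k)$ with $\mathbf w=\calB_\O\tilde f$, where $\mathbf z_k$ comes from the rescaled Bogovskii operator on $B(x_{\e,k},b_1a_\e)\setminus T_{\e,k}$. Your bookkeeping produces the stated factor: Poincar\'e on $A_k$ for the fluctuation, and Jensen on the cells for the mean, which gives $a_\e^{(3-q)/q}\e^{-3/q}=\e^{\frac{(3-q)\a-3}{q}}$. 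Two details need fixing when you write it out.

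\textbf{Compatibility condition.} The quantity whose integral your flux argument shows to vanish is $\dive(\chi_k\mathbf w)=\chi_k f+\nabla\chi_k\cdot\mathbf w$, not $\nabla\chi_k\cdot\mathbf w$ alone. The latter has integral $-\int\chi_k f$ over $B(x_{\e,k},b_1a_\e)\setminus T_{\e,k}$, which need not vanish. So the local Bogovskii operator must be applied to the full expression. The extra term only costs $\|f\|_{L^q}$ after summation over $k$.

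\textbf{The step you flagged, for $q<3$.} The annulus-to-cell comparison carries the weight $a_\e^{1-3/q}$, not $\e^{1-3/q}$. With $\e^{1-3/q}$ the inequality is false when $a_\e\ll\e$. Take a bump of height one at scale $a_\e$: the left side is $\approx 1$, while the right side is $\approx (a_\e/\e)^{3/q-1}\to 0$. The correct bound follows directly from Sobolev--Poincar\'e on the cell and H\"older on $A_k$:
\[
|\langle\mathbf w\rangle_{A_k}-\langle\mathbf w\rangle_{\e Q_k}|\le |A_k|^{-1/q^*}\,\|\mathbf w-\langle\mathbf w\rangle_{\e Q_k}\|_{L^{q^*}(\e Q_k)}\le C\,a_\e^{1-3/q}\|\nabla\mathbf w\|_{L^q(\e Q_k)},\qquad q^*=\tfrac{3q}{3-q}.
\]
Multiplying by $\|\nabla\chi_k\|_{L^q}\sim a_\e^{3/q-1}$ gives exactly $O(1)\|\nabla\mathbf w\|_{L^q(\e Q_k)}$. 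This sums over $k$ to $\|\nabla\mathbf w\|_{L^q(\O)}$, so no Hardy inequality is needed.

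\textbf{The cases $q\ge 3$.} The weight $\e^{1-3/q}$ is the right one only for $q>3$, via Morrey on the cell. There it gives $(\e/a_\e)^{1-3/q}=\e\cdot\e^{\frac{(3-q)\a-3}{q}}$. For $q=3$ a dyadic chain produces a factor $|\log\e|\le C\e^{-1}$. Both are absorbed in $1+\e^{\frac{(3-q)\a-3}{q}}$.

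With these corrections, your argument covers the full range $1<q<\infty$.
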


There are also some basic inequalities in perforated domains as follows (see Chapter $11$ of \cite{FENO6}).
\begin{lemma}[Poincar\'e inequality] \label{Poincare inequality}
For $\mathbf v\in H_0^{1}(\Omega_\e)$, there holds
\be\nn
\|\mathbf v\|_{L^{2}(\Omega_\e)}\leq C\|\nabla \mathbf v\|_{L^{2}(\Omega_\e)},
\ee
where the constant is independent of $\e$.
\end{lemma}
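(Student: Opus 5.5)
Since every $\mathbf v\in H^1_0(\Omega_\e;\R^3)$ vanishes on $\pa\Omega_\e$, which contains $\pa\Omega$, I will reduce the statement to the classical Poincaré inequality on the fixed domain $\Omega$ by extending $\mathbf v$ by zero. Define $\widetilde{\mathbf v}:=\mathbf v$ in $\Omega_\e$ and $\widetilde{\mathbf v}:=\mathbf 0$ in $\Omega\setminus\Omega_\e=\bigcup_{k\in K_\e}T_{\e,k}$. The first step is to check that $\widetilde{\mathbf v}\in H^1_0(\Omega;\R^3)$ with $\nabla\widetilde{\mathbf v}=\widetilde{\nabla\mathbf v}$, the zero extension of $\nabla \mathbf v$. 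I will do this by density: take $\mathbf v_n\in C_c^\infty(\Omega_\e;\R^3)$ with $\mathbf v_n\to\mathbf v$ in $H^1(\Omega_\e)$. Their zero extensions lie in $C_c^\infty(\Omega;\R^3)$ and form a Cauchy sequence in $H^1(\Omega)$, since the norms agree with those on $\Omega_\e$. The limit is therefore $\widetilde{\mathbf v}\in H^1_0(\Omega)$, and its gradient is the zero extension of $\nabla\mathbf v$.

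Next I apply the standard Poincaré inequality on the bounded domain $\Omega$ to each component of $\widetilde{\mathbf v}$:
\be\nn
\|\widetilde{\mathbf v}\|_{L^2(\Omega)}\le C_\Omega\|\nabla\widetilde{\mathbf v}\|_{L^2(\Omega)}.
\ee
Because $\|\widetilde{\mathbf v}\|_{L^2(\Omega)}=\|\mathbf v\|_{L^2(\Omega_\e)}$ and $\|\nabla\widetilde{\mathbf v}\|_{L^2(\Omega)}=\|\nabla\mathbf v\|_{L^2(\Omega_\e)}$, this is exactly the claimed inequality. The constant $C_\Omega$ depends only on $\Omega$ (for instance on its diameter), so it is independent of $\e$.

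The only point that needs care is the extension step. The perforations are sets of positive measure, so one must make sure no singular part of the gradient appears on $\pa T_{\e,k}$. This is guaranteed by the zero trace of $\mathbf v$, through the density argument above. The smallness $a_\e=\e^{\a}$ of the holes plays no role here: perforations can only improve the Poincaré constant, and we never need a sharper, hole-dependent constant.
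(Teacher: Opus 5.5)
Your argument is correct: extension by zero maps $H^1_0(\Omega_\e;\R^3)$ isometrically into $H^1_0(\Omega;\R^3)$, so the Poincar\'e constant of the fixed domain $\Omega$ works for every $\e$. The paper gives no proof of its own and only cites a standard reference (Chapter 11 of the cited monograph), where the standard justification is this same zero-extension reduction, so your route matches it.
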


\begin{lemma}[Korn inequality] \label{Korn inequality}
For $\mathbf v\in H_0^{1}(\Omega_\e)$, there holds
\be\nn
\|\nabla \mathbf v\|_{L^{2}(\Omega_\e)}\leq C\|D(\mathbf v)\|_{L^{2}(\Omega_\e)},
\ee
where the constant is independent of $\e$.
\end{lemma}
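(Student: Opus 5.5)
The plan is to prove the Korn inequality on $\Omega_\e$ with an $\e$-independent constant by extending $\mathbf v$ by zero and using Korn's inequality on a fixed domain. For $\mathbf v\in H^1_0(\Omega_\e;\R^3)$ let $\widetilde{\mathbf v}$ be its extension by zero to $\R^3$. Since $\mathbf v$ has zero trace on $\pa\Omega_\e$, which includes both $\pa\Omega$ and every hole boundary $\pa T_{\e,k}$, we have $\widetilde{\mathbf v}\in H^1_0(\Omega;\R^3)$. Moreover
\[
\nabla\widetilde{\mathbf v}=\widetilde{\nabla\mathbf v}\quad\text{a.e. in }\Omega,
\]
so $\|\nabla\widetilde{\mathbf v}\|_{L^2(\Omega)}=\|\nabla\mathbf v\|_{L^2(\Omega_\e)}$ and $\|D(\widetilde{\mathbf v})\|_{L^2(\Omega)}=\|D(\mathbf v)\|_{L^2(\Omega_\e)}$. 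Everything therefore reduces to a Korn inequality on the single fixed domain $\Omega$ (or on $\R^3$), whose constant cannot depend on $\e$.

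For the inequality on $H^1_0$, I will use the classical identity for compactly supported smooth fields. Integrating by parts twice gives
\[
\int |\nabla \mathbf w|^2\,\dd x = 2\int |D(\mathbf w)|^2\,\dd x - \int |\dive \mathbf w|^2\,\dd x ,
\]
because $\int \pa_j w_i\,\pa_i w_j\,\dd x=\int \pa_i w_i\,\pa_j w_j\,\dd x$. Dropping the nonpositive last term yields
\[
\|\nabla\mathbf w\|_{L^2}^2\le 2\|D(\mathbf w)\|_{L^2}^2 .
\]
By density of $C_c^\infty(\Omega;\R^3)$ in $H^1_0(\Omega;\R^3)$ this extends to $\widetilde{\mathbf v}$, and the claim follows with $C=\sqrt 2$.

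There is no serious obstacle here. The only point needing care is that the zero extension of an $H^1_0(\Omega_\e)$ function lies in $H^1_0(\Omega)$ with the gradient extended by zero. I will verify this by approximating $\mathbf v$ in $H^1(\Omega_\e)$ by functions in $C_c^\infty(\Omega_\e)$, whose zero extensions lie in $C_c^\infty(\Omega)$, and passing to the limit. This is exactly the mechanism that makes the constant insensitive to the number and size of the holes, as in Chapter 11 of \cite{FENO6}. The same zero-extension argument combined with the Poincar\'e inequality on $\Omega$ also gives Lemma~\ref{Poincare inequality}.
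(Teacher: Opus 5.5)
Your proof is correct. The paper gives no argument of its own and simply cites Chapter 11 of \cite{FENO6}; your integration-by-parts identity $\|\nabla\mathbf w\|_{L^2}^2=2\|D(\mathbf w)\|_{L^2}^2-\|\dive\mathbf w\|_{L^2}^2$ for compactly supported smooth fields, extended by density, is the standard argument behind that citation and gives $C=\sqrt2$. One small observation: for this lemma the zero extension is not even needed, since the identity already holds on $C_c^\infty(\Omega_\e;\R^3)$, whose closure is $H^1_0(\Omega_\e;\R^3)$, with a constant that does not depend on the domain at all. The extension to the fixed domain $\Omega$ is what genuinely matters for the $\e$-independence in Lemma~\ref{Poincare inequality}.
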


Moreover, the following Aubin--Lions type argument (see Lemma 5.1 in \cite{Lions-com}) is introduced to obtain the convergence of nonlinear convective term.
\begin{lemma}\label{lem-AL}
Let $g^n\to g$ weakly in $L^{p_1}(0,T;L^{p_2}(\Omega))$, $h^n \to h$ weakly in $L^{q_1}(0,T;L^{q_2}(\Omega))$,
where $1 \leq p_1,p_2 \leq +\infty$ and
\be\nn
\frac{1}{p_1}+\frac{1}{q_1}=\frac{1}{p_2}+\frac{1}{q_2}=1.
\ee
We assume in addition that
\be\nn
\frac{\pa g^n}{\pa t}
\text{ is bounded in }
L^1(0,T;W^{-m,1}(\Omega))
\text{ for some } m\geq 0 \text{ independent of } n,
\ee
and
\be\nn
\|h^n-h^n(\cdot+\xi,t)\|_{L^{q_1}(0,T;L^{q_2}(\Omega))}\to 0\quad \text{as } |\xi|\to 0,
\text{ uniformly in } n.
\ee
Then
\be\nn
g^n h^n \to gh \text{ in } {\cal D'}((0,T)\times \Omega).
\ee
\end{lemma}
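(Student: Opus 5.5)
We prove the statement by splitting $h^n$ into a spatially regularized part, for which the product with $g^n$ can be handled by a compactness argument in time, and a remainder that is small uniformly in $n$ by the translation hypothesis.

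\textbf{Setup and splitting.} Fix a test function $\varphi\in C_c^\infty((0,T)\times\Omega)$ and let $K\subset\subset\Omega$ contain the spatial support of $\varphi$. Let $\eta_\delta$ be a standard spatial mollifier, and choose $\delta<\mathrm{dist}(K,\partial\Omega)/2$. Set $h^n_\delta:=h^n*_x\eta_\delta$ and $h_\delta:=h*_x\eta_\delta$; these are well defined on a neighbourhood of $K$. Write
\begin{equation*}
\int_0^T\!\!\int_\Omega g^n h^n\varphi
=\int_0^T\!\!\int_\Omega g^n h^n_\delta\varphi+\int_0^T\!\!\int_\Omega g^n (h^n-h^n_\delta)\varphi .
\end{equation*}
For the second term, Minkowski's integral inequality gives
\begin{equation*}
\|(h^n-h^n_\delta)\varphi\|_{L^{q_1}L^{q_2}}\le C_\varphi\sup_{|\xi|\le\delta}\|h^n-h^n(\cdot+\xi)\|_{L^{q_1}L^{q_2}}.
\end{equation*}
By the translation hypothesis this tends to $0$ as $\delta\to0$, uniformly in $n$. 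Since $g^n$ is bounded in $L^{p_1}L^{p_2}$, weak convergence being bounded, H\"older's inequality with the dual exponents makes the second term $o(1)$ as $\delta\to 0$, uniformly in $n$. The same estimate holds for the limit $h$, by weak lower semicontinuity of the norm applied to the translation differences.

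\textbf{Regularized term.} For fixed $\delta$, derivatives fall on the mollifier, so $\psi^n_\delta:=h^n_\delta\varphi$ is bounded in $L^{q_1}(0,T;W^{k,q_2}_0(K'))$ for every $k$, uniformly in $n$, on some $K\subset\subset K'\subset\subset\Omega$. Moreover $\psi^n_\delta\to h_\delta\varphi$ weakly, or weakly-$*$ when $q_1=\infty$, since convolution commutes with weak limits.

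For $g^n$, I will use the Aubin--Lions--Simon lemma on the triple
\begin{equation*}
L^{p_2}(K')\subset\subset W^{-1,r}(K')\subset W^{-m-1,1}(K').
\end{equation*}
Here $r=p_2$ if $p_2<\infty$, and $r$ is any large finite number if $p_2=\infty$. The inputs are that $g^n$ is bounded in $L^{p_1}L^{p_2}$ and that $\partial_t g^n$ is bounded in $L^1W^{-m,1}$. The conclusion is that $g^n$ is relatively compact in $L^{p_1}(0,T;W^{-1,r}(K'))$ if $p_1<\infty$, and in $C([0,T];W^{-1,r})$ if $p_1=\infty$. The strong limit is identified with $g$ by the weak convergence.

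The pairing $\langle g^n,\psi^n_\delta\rangle$ is then a product of a strongly convergent sequence in $L^{p_1}W^{-1,r}$ and a weakly convergent sequence in the dual space $L^{q_1}W^{1,r'}_0$. The latter holds because $\psi^n_\delta$ is bounded in every $W^{k,q_2}$, so it lies in $W^{1,r'}$ with compact support. Hence $\langle g^n,\psi^n_\delta\rangle\to\langle g,h_\delta\varphi\rangle$.

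\textbf{Conclusion.} Combining the two parts gives
\begin{equation*}
\limsup_n\Big|\int g^nh^n\varphi-\int gh\varphi\Big|\le o_\delta(1)+\Big|\int g(h_\delta-h)\varphi\Big|,
\end{equation*}
and the right-hand side tends to $0$ as $\delta\to0$.

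\textbf{Main obstacle.} I expect the hardest step to be the compactness of $g^n$ at the endpoint exponents. When $p_1=1$ or $p_2\in\{1,\infty\}$, the embedding $L^{p_2}\subset\subset W^{-1,r}$ must be checked with care: for $p_2=1$ one uses $L^1\subset\subset W^{-1,r}$ for $r<3/2$. The exponents must then be matched so that $\psi^n_\delta$ lies in the dual space. This is where the freedom from the mollifier, which gives arbitrarily many derivatives and any integrability for fixed $\delta$, will be used.
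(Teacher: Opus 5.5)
The paper does not prove this lemma; it quotes Lemma 5.1 of Lions' \emph{Mathematical Topics in Fluid Mechanics}. Your route is the standard one: mollify $h^n$ in $x$, absorb $h^n-h^n_\delta$ uniformly in $n$ through the translation hypothesis, and treat the smoothed part by time compactness of $g^n$ in a negative Sobolev space. For $p_1<\infty$ it closes as written. This covers the paper's own use of the lemma for the convective term, where $g^n=\widetilde{\vu}_\e^{(1)}$ is bounded in $L^4(0,T;L^2)$. The endpoints you call the main obstacle ($p_1=1$, and $p_2\in\{1,\infty\}$ with $r<3/2$ or $r$ large) are in fact harmless. There is, however, a genuine gap at the endpoint you did not flag: $p_1=\infty$, $q_1=1$.

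For $p_1=\infty$ you claim that Aubin--Lions--Simon makes $\{g^n\}$ relatively compact in $C([0,T];W^{-1,r})$. With $\partial_t g^n$ bounded only in $L^1(0,T;Y)$ this is false. Simon's $C([0,T];B)$ statement needs $\partial_t g^n$ bounded in $L^s(0,T;Y)$ for some $s>1$. For example, take $g^n(t,x)=\theta(n(t-t_0))a(x)$ with $\theta$ a smooth step and $a$ fixed. Then $\partial_t g^n$ is bounded in $L^1(0,T;L^{p_2})$, but the pointwise limit jumps at $t_0$. Hence no subsequence converges in $C([0,T];B)$, nor even in $L^\infty(0,T;B)$. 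You only get compactness in $L^p(0,T;B)$ for $p<\infty$. Since $\psi^n_\delta$ is merely bounded in $L^1$ in time, the ``strong times weak'' pairing no longer closes.

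The missing ingredient is equi-integrability in time. It must come from the \emph{weak} convergence of $h^n$, because mere $L^1_t$-boundedness allows concentration in time and the conclusion then fails. Concretely:
\begin{itemize}
\item The inclusion $L^1(0,T;L^{q_2}(\Omega))\to L^1((0,T)\times K')$ is continuous, so $h^n\rightharpoonup h$ weakly in $L^1((0,T)\times K')$. By Dunford--Pettis, $t\mapsto\|h^n(t)\|_{L^1(K')}$ is uniformly integrable on $(0,T)$.
\item For fixed $\delta$, $\|\psi^n_\delta(t)\|_{W^{1,r'}}\le C_\delta\|h^n(t)\|_{L^1(K')}$, provided $K'$ contains the $\delta$-neighbourhood of $K$.
\item $\|g^n(t)-g(t)\|_{W^{-1,r}}$ is bounded in $L^\infty(0,T)$ and, by the $L^1(0,T;W^{-1,r})$ compactness, tends to $0$ in measure.
\end{itemize}
Split $(0,T)$ into the set where $\|g^n(t)-g(t)\|_{W^{-1,r}}>\epsilon$ and its complement. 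On the first set use the uniform integrability; on the complement use the factor $\epsilon$. This gives $\int_0^T\langle g^n-g,\psi^n_\delta\rangle\,dt\to0$. The remaining term $\langle g,\psi^n_\delta-h_\delta\varphi\rangle$ vanishes by weak convergence, as you say.
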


\subsection{Main results}
Since $\vu_\e\in L^2(0,T; H^1_0(\Omega_\e; \R^3))$, we can extend $\vu_\e$ to be zero outside $\Omega_\e$, that is,
\ba\nn
\widetilde\vu_\e(t,x)=\left\{
\begin{aligned}
&\vu_\e(t,x), \quad &x\in \Omega_\e,\\
&\mathbf 0,\quad &x\in \Omega\backslash \Omega_\e.
\end{aligned}
\right .
\ea
For a.a. $t\in[0,T)$, we use extension operator $ E^\e$ in Lemma \ref{extension lemma} to extend $\phi(t,\cdot), \mu(t,\cdot)\in H^1(\Omega_\e)$ to 
\ba\nn
\widetilde{\phi}(t,\cdot):=E^\e(\phi(t,\cdot)), \quad \widetilde{\mu}(t,\cdot):=E^\e(\mu(t,\cdot)).
\ea

Now we are ready to formulate our main result:
\begin{theorem} \label{Main theorem}
Let $\{ \Omega_\e\}_{\e > 0}$ be a family of perforated domains specified in Section \ref{Problem formulation}. Let $(\vu_{\e},\phi_\e,\mu_\e)$ be a weak solution to the problem \eqref{NSCH}--\eqref{boundary condition}. Under the assumptions of capillary parameter, initial data and external force in \eqref{assumption-lambda}--\eqref{assumption-g}, up to a subsequence, the extension $( \widetilde\vu_\e, \widetilde\phi_\e, \widetilde\mu_\e)$ satisfy
\ba \nn
\widetilde\vu_\e \to \vu \ &\mbox{weakly-(*) in}\ L^\infty(0,T; L^2(\Omega; \R^3)), \mbox{ weakly in } L^2(0,T; H^1_0(\Omega;\R^3)), \\
&\mbox{and strongly in}\ L^2(0,T; L^2(\Omega;\mathbb{R}^3)),\\
\widetilde\phi_\e \to \phi \ &\mbox{weakly-(*) in}\ L^\infty(0,T; H^1(\Omega))  \mbox{ and strongly in}\ L^2(0,T; L^2(\Omega)),\\
\widetilde\mu_\e \to \mu \ &\mbox{weakly in}\ L^2(0,T; H^1(\Omega)),
\ea
where $( \vu, \phi,\mu)$ is a weak solution to the problem
\ba\label{limit eq}
	\begin{cases}		
		\partial_t  \vu + \dive(\vu\otimes \vu) -\dive(\nu(\phi) D(\vu)) + \nabla p +\lambda \phi \nabla \mu=\sqrt{\lambda}\mathbf g,  &\mbox{in } (0,T) \times \O,\\
		 \dive \vu = 0, &\mbox{in } (0,T) \times \O,\\
		\partial_t \phi +\vu\cdot\nabla\phi-\dive(M(\phi) \nabla\mu) 
		=0 , & \mbox{in } (0,T) \times \O,\\
		\mu=-\Delta \phi+F'(\phi) ,& \mbox{in } (0,T) \times \O,\\
		\vu(0,x) = \sqrt{\lambda}\vu_{0}(x),\quad \phi(0,x)=\phi_0(x) & \mbox{in } \O,
	\end{cases}
	\ea
with boundary condition
 \begin{equation} \label{boundary condition for limit eq}
\vu = \mathbf{0}, \quad \mathbf{n}\cdot \nabla\phi=0, \quad \mathbf{n}\cdot (M(\phi)\nabla\mu)=0 \quad\text{ on } \pa\Omega.
\end{equation}
Here, $\mathbf{n}$ denotes unit outward normal vector on $\pa\Omega$. 
\end{theorem}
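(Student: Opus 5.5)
The proof has four stages: uniform bounds from the energy inequality, equations on all of $\Omega$ for the extended unknowns, compactness of the nonlinear terms, and passage to the limit.

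\textbf{Uniform bounds.} We start from the energy inequality \eqref{energy inequality}, using Assumption \ref{Assumption}. Since $\vu_\e^0=\sqrt{\lambda_\e}\vu_0$, $\phi_\e^0=\phi_0$ and $\mathbf g_\e=\sqrt{\lambda_\e}\mathbf g$, the initial energy is $\lambda_\e$ times a fixed constant. The forcing term is estimated by Young's inequality as
\begin{equation*}
\int_0^t\int_{\Omega_\e}\sqrt{\lambda_\e}\,\mathbf g\cdot\vu_\e \le \lambda_\e\|\mathbf g\|^2_{L^2L^2}+\tfrac14\int_0^t\|\vu_\e\|^2_{L^2},
\end{equation*}
followed by Gronwall. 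Dividing by $\lambda_\e$ then bounds $\lambda_\e^{-1/2}\vu_\e$ in $L^\infty L^2\cap L^2H^1_0$, using Lemmas \ref{Poincare inequality} and \ref{Korn inequality}. It also bounds $\phi_\e$ in $L^\infty H^1$, using \eqref{F bound}, and $\nabla\mu_\e$ in $L^2L^2$. Since $\lambda>0$, these are genuine bounds on $\vu_\e$.

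The mean of $\mu_\e$ is controlled by testing the chemical-potential equation with $1$: $\int\mu_\e=\int F'(\phi_\e)$, which is bounded because $\phi_\e\in L^\infty L^6$. With the Poincaré inequality on $\Omega_\e$ (obtainable via the extension operator of Lemma \ref{extension lemma}), this bounds $\mu_\e$ in $L^2H^1$. Lemma \ref{extension lemma} then transfers all bounds to $\widetilde\phi_\e,\widetilde\mu_\e$, and $\widetilde\vu_\e$ stays in $L^2H^1_0(\Omega)$ because the extension is by zero. This gives the weak limits stated in the theorem.

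\textbf{Equations in $\Omega$.} The key point is that holes of size $\e^\a$ with $\a>3$ have vanishing total capacity, since $|K_\e|a_\e\sim\e^{\a-3}\to0$. We build cut-off functions $g_\e$ equal to $0$ near each hole and $1$ outside $B(x_{\e,k},b_1a_\e)$, with $\|1-g_\e\|_{H^1}\to0$; moreover $\|1-g_\e\|_{W^{1,q}}\to0$ for some $q>2$ close to $3$. Test functions on $\Omega$ are then modified as follows.
\begin{itemize}
\item For the momentum equation, we take $g_\e\bvp-\calB_\e(\dive(g_\e\bvp))$. Lemma \ref{lem-div}, with its constant $1+\e^{((3-q)\a-3)/q}$ bounded for $q\le 3\a/(\a+1)$, shows this corrector tends to $0$ in $W^{1,q}$.
\item For the phase-field and chemical-potential equations, we simply observe that the equations hold with any $H^1(\Omega)$ test function restricted to $\Omega_\e$. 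The difference between integrals over $\Omega_\e$ and over $\Omega$ involves the holes, whose total measure $\e^{3\a-3}\to0$; the integrands are equi-integrable from the bounds, so these remainders vanish.
\end{itemize}
This yields equations in $\mathcal D'((0,T)\times\Omega)$ for the extended functions, up to remainders tending to zero.

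\textbf{Compactness.} From the modified equations we get a bound on $\partial_t\widetilde\vu_\e$ in $L^{4/3}W^{-1,q'}_\sigma$-type spaces (on divergence-free tests). Combined with the $L^2H^1_0$ bound, a standard Aubin--Lions argument applied to the projected velocity gives strong $L^2L^2$ convergence, and hence convergence of $\vu_\e\otimes\vu_\e$; alternatively Lemma \ref{lem-AL} gives the convective limit directly. For $\widetilde\phi_\e$, $\partial_t\phi_\e$ is bounded in $L^2$ of the dual of $H^1(\Omega_\e)$. Testing with $\varphi|_{\Omega_\e}$ gives $\partial_t\widetilde\phi_\e$ bounded in $L^2(H^1(\Omega))'$ up to the hole contribution $\int_{\Omega\setminus\Omega_\e}\partial_t\widetilde\phi_\e\varphi$, which is not directly controlled. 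I would handle this instead via Lemma \ref{lem-AL}, or via the strong convergence of $\widetilde\phi_\e\chi_{\Omega_\e}$ compared with $\widetilde\phi_\e$ (their difference is small in $L^2$ since $|\Omega\setminus\Omega_\e|\to0$ and $\widetilde\phi_\e$ is bounded in $L^6$), together with an Arzelà--Ascoli argument in time. I expect this time-compactness for $\widetilde\phi_\e$ across the holes to be the main obstacle.

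\textbf{Passage to the limit.} Strong convergence of $\widetilde\phi_\e$ in $L^2L^2$, plus the $L^\infty L^6$ bound, gives $F'(\widetilde\phi_\e)\to F'(\phi)$ strongly in $L^1$, and $\nu(\widetilde\phi_\e),M(\widetilde\phi_\e)\to\nu(\phi),M(\phi)$ a.e. and boundedly. Thus $\nu D\vu_\e$ and $M\nabla\mu_\e$ converge weakly, as products of a boundedly a.e.-convergent factor and a weakly convergent factor. The term $\lambda_\e\phi_\e\nabla\mu_\e$ converges as strong times weak, and $\phi_\e\vu_\e$ converges as strong times weak. The initial data pass to the limit through the test functions at $t=0$.
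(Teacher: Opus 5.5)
Your architecture matches the paper's (energy bounds, cut-off $g_\e$ plus Bogovskii corrector, compactness, limit passage). However, the velocity compactness step has a genuine gap. You assert that the modified momentum equation bounds $\partial_t\widetilde\vu_\e$ uniformly in an $L^{4/3}W^{-1,q'}_\sigma$-type space, and then apply Aubin--Lions (or Lemma~\ref{lem-AL}) to $\widetilde\vu_\e$. This does not follow.

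For solenoidal $\bvp\in C_c^\infty((0,T)\times\Omega;\R^3)$ one has $\l\partial_t\widetilde\vu_\e,\bvp\r=-\int_0^T\!\int_{\Omega_\e}\vu_\e\cdot\partial_t\bvp\,\dd x\,\dd t$. But $\bvp|_{\Omega_\e}$ is not admissible, since it does not vanish on the holes. Write $\bvp=\bvp_1+\bm\psi_\e$ with $\bvp_1=g_\e\bvp-\calB_\e(\bvp\cdot\nabla g_\e)$ and $\bm\psi_\e=(1-g_\e)\bvp+\calB_\e(\bvp\cdot\nabla g_\e)$. The weak formulation controls the $\bvp_1$ part by $C\|\bvp\|_{L^4W^{1,r_1}}$. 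The corrector $\bm\psi_\e$, however, depends on time through $\bvp$, so the remainder $\int_0^T\!\int_\Omega\widetilde\vu_\e\cdot\partial_t\bm\psi_\e$ contains $\partial_t\bvp$. With energy-level bounds it is only $O(\e^\sigma)\|\partial_t\bvp\|_{L^{4/3}L^2}$. You cannot move the time derivative back onto $\vu_\e$, because $\bm\psi_\e$ is not an admissible test function either. What you actually get is
\[
|\l\partial_t\widetilde\vu_\e,\bvp\r|\le C\big(\|\bvp\|_{L^4W^{1,r_1}}+\e^\sigma\|\partial_t\bvp\|_{L^{4/3}L^2}\big),
\]
which is not a uniform bound on $\partial_t\widetilde\vu_\e$. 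Hence neither Aubin--Lions nor Lemma~\ref{lem-AL} (which needs $\partial_t g^n$ bounded in $L^1W^{-m,1}$) applies to $\widetilde\vu_\e$. Leray projection changes nothing, since $\widetilde\vu_\e$ is already solenoidal.

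The missing idea is the paper's splitting, obtained from this estimate by duality: $\widetilde\vu_\e=\widetilde\vu_\e^{(1)}+\e^\sigma\widetilde\vu_\e^{(2)}$, with $\partial_t\widetilde\vu_\e^{(1)}$ bounded in $L^{4/3}W^{-1,r_1'}$ and $\widetilde\vu_\e^{(2)}$ bounded in $L^4L^2$. The argument then runs as follows.
\begin{itemize}
\item Lemma~\ref{lem-AL} is applied to the pair $(\widetilde\vu_\e^{(1)},\widetilde\vu_\e)$, using the spatial translation estimate from the $L^2H^1_0$ bound.
\item The product $\e^\sigma\widetilde\vu_\e^{(2)}\otimes\widetilde\vu_\e$ tends to zero.
\item Strong $L^2L^2$ convergence follows by testing the weak limit of $\widetilde\vu_\e\otimes\widetilde\vu_\e$ with the identity matrix.
\end{itemize}
Alternatively, an Arzel\`a--Ascoli argument based on the asymptotic equicontinuity $\|\widetilde\vu_\e(t)-\widetilde\vu_\e(s)\|_{W^{-1,r_1'}_\sigma}\le C|t-s|^{1/4}+o(1)$ would also work.

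This is exactly the hole-contribution problem you flagged for $\widetilde\phi_\e$, and there your remedy does go through. Testing with $\varphi|_{\Omega_\e}$ bounds $\partial_t(\chi_{\Omega_\e}\widetilde\phi_\e)$ in $L^2H^{-1}$, and $\chi_{\Omega\setminus\Omega_\e}\widetilde\phi_\e\to0$ in $L^\infty L^2$, so Arzel\`a--Ascoli plus an Ehrling inequality closes the argument. (Lemma~\ref{lem-AL} alone would not help there, for the same reason as above.) Your route is a sharp-cut-off version of the paper's splitting $\widetilde\phi_\e=g_\e\widetilde\phi_\e+(1-g_\e)\widetilde\phi_\e$. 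It needs the phase equation with $H^1(\Omega_\e)$ test functions, i.e.\ with the Neumann condition built in. The paper's smooth cut-off needs only the compactly supported tests of Definition~\ref{def weak solution}. For the velocity, no sharp-cut-off shortcut exists, because of the no-slip condition on the holes.

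Two minor points. First, $\|\nabla g_\e\|_{L^q}\to0$ requires $2<q<3-3/\alpha$, which is close to $2$ when $\alpha$ is near $3$; it is not "$q$ close to $3$", nor $q\le 3\alpha/(\alpha+1)$. Second, your bounds step does not need $\lambda>0$, which the theorem does not assume.
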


\begin{remark}  We list several key observations regarding the weak formulation and asymptotic behavior of the coupled NSCH system.
\begin{enumerate}[label=(\roman*), leftmargin=*, align=left]
\item The definition of weak solution to problem \eqref{limit eq}--\eqref{boundary condition for limit eq} is similar to Definition \ref{def weak solution}.

\item Both the microscopic and macroscopic boundary conditions involve the mobility function $M(\phi)$, since the mass diffusion flux in the Cahn--Hilliard equation takes the form $\mathcal J=-M(\phi)\nabla \mu$, and no mass can pass through solid obstacles in perforated media or the boundary of the macroscopic domain.

\item If $\lambda>0$ in \eqref{assumption-lambda}, the problem \eqref{limit eq}--\eqref{boundary condition for limit eq} is the same as original NSCH system.

\item If $\lambda=0$ in \eqref{assumption-lambda}, then the strong convergence of $\widetilde \vu_\e$ in $L^2(0,T;L^2(\Omega))$ implies that $\vu=\bf 0$,  and $(\phi$, $\mu)$ is a weak solution to a Cahn--Hilliard system.
Under the scaling $\vu_\e=\sqrt{\lambda_\e}\ww_\e$, we can get uniform estimates for $\ww_\e$. Notice that in the momentum equations $\eqref{NSCH}_1$, the convective inertial term $\dive(\vu_\e\otimes \vu_\e)$ and capillary force term $\lambda_\e \phi_\e \nabla \mu_\e$ are of order $O(\lambda_\e)$, while other terms are of order $O(\sqrt{\lambda_\e})$, so convective inertial term and capillary force term disappear in the limit system.
We can show that the limit of the scaled extended velocity $\widetilde\ww_\e$ in $L^2(0,T;L^2(\Omega))$ is a weak solution to a Stokes equation and give the following corollary. 
\end{enumerate}
\end{remark}

\begin{corollary} \label{corollary}
Let $\{ \Omega_\e\}_{\e > 0}$ be a family of perforated domains specified in Section \ref{Problem formulation}. Let $(\vu_{\e},\phi_\e,\mu_\e)$ be a weak solution to the problem \eqref{NSCH}--\eqref{boundary condition}. Under the assumption of capillary parameter, initial data and external force in \eqref{assumption-lambda}--\eqref{assumption-g} with $\lambda=0$, up to a subsequence, the extension $( \widetilde\vu_\e, \widetilde\phi_\e, \widetilde\mu_\e)$ satisfy
\ba \nn
\frac{\widetilde\vu_\e}{\sqrt{\lambda_\e}} \to \ww \ &\mbox{weakly-(*) in}\ L^\infty(0,T; L^2(\Omega; \R^3)), \mbox{ weakly in}\ L^2(0,T; H^1_0(\Omega;\R^3)), \\
&\mbox{and strongly in}\ L^2(0,T; L^2(\Omega;\mathbb{R}^3)),\\
\widetilde\phi_\e \to \phi \ &\mbox{weakly-(*) in}\ L^\infty(0,T; H^1(\Omega))  \mbox{ and strongly in}\ L^2(0,T; L^2(\Omega)),\\
\widetilde\mu_\e \to \mu \ &\mbox{weakly in}\ L^2(0,T; H^1(\Omega)),
\ea
where $( \ww, \phi,\mu)$ is a weak solution to the problem
\ba\label{limit eq-coro}
	\begin{cases}		
		\partial_t  \ww  -\dive(\nu(\phi) D(\ww)) + \nabla p =\mathbf g,  &\mbox{in } (0,T) \times \O,\\
		 \dive \ww = 0, &\mbox{in } (0,T) \times \O,\\
		\partial_t \phi -\dive(M(\phi)\nabla \mu) 
		=0 , & \mbox{in } (0,T) \times \O,\\
		\mu=-\Delta \phi+F'(\phi) ,& \mbox{in } (0,T) \times \O,\\
		\ww(0,x) = \vu_{0}(x),\quad \phi(0,x)=\phi_0(x) & \mbox{in } \O,
	\end{cases}
	\ea
with boundary condition
 \begin{equation} \label{boundary condition for limit eq-coro}
\ww = \mathbf{0}, \quad \mathbf{n}\cdot \nabla\phi=0, \quad \mathbf{n}\cdot (M(\phi) \nabla\mu)=0 \quad\text{ on } \pa\Omega.
\end{equation}
\end{corollary}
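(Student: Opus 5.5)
We rescale $\ww_\e:=\vu_\e/\sqrt{\lambda_\e}$ and rerun the proof of Theorem \ref{Main theorem} for the rescaled system. Dividing the momentum equation $\eqref{NSCH}_1$ by $\sqrt{\lambda_\e}$ gives
\[
\partial_t\ww_\e+\sqrt{\lambda_\e}\,\dive(\ww_\e\otimes\ww_\e)-\dive(\nu(\phi_\e)D(\ww_\e))+\nabla \frac{p_\e}{\sqrt{\lambda_\e}}+\sqrt{\lambda_\e}\,\phi_\e\nabla\mu_\e=\mathbf g,
\]
with $\ww_\e(0)=\vu_0$. The Cahn--Hilliard part becomes $\partial_t\phi_\e+\sqrt{\lambda_\e}\,\ww_\e\cdot\nabla\phi_\e-\dive(M(\phi_\e)\nabla\mu_\e)=0$, and the chemical-potential equation is unchanged.

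\textbf{Uniform bounds.} Divide the energy inequality \eqref{energy inequality} by $\lambda_\e$, and use \eqref{initial-u-phi}, \eqref{assumption-g}, \eqref{F bound}, Korn (Lemma \ref{Korn inequality}) and Poincar\'e (Lemma \ref{Poincare inequality}) with Gronwall. This gives $\ww_\e$ bounded in $L^\infty L^2\cap L^2H^1_0$, and $\nabla\phi_\e$, $F(\phi_\e)$ bounded in $L^\infty L^2$, $L^\infty L^1$. The key point is that we get $\|\nabla\mu_\e\|_{L^2L^2}\le C$ without the factor $\lambda_\e$, exactly as in the case $\lambda>0$. Mass conservation (test the phase equation with $1$) bounds the mean of $\phi_\e$. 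Testing the chemical-potential equation with $1$ and using $|F'(s)|\le |s|^3+1$ together with $H^1\subset L^6$ bounds the mean of $\mu_\e$, hence $\mu_\e$ in $L^2H^1$. Lemma \ref{extension lemma} then transfers these bounds to $\widetilde\phi_\e,\widetilde\mu_\e$, and zero extension transfers them to $\widetilde\ww_\e$. We obtain weak limits $\ww,\phi,\mu$ in the stated spaces.

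\textbf{Strong convergence.} For $\widetilde\phi_\e$ we follow Section \ref{Strong convergence of phi}. The time derivative $\partial_t\phi_\e$ is bounded in $L^2((H^1)')$, since $\sqrt{\lambda_\e}\,\phi_\e\ww_\e$ is bounded in $L^2L^{3/2}$ (indeed in $L^2L^2$ after interpolation). Testing with $E^\e$-type functions controls the time derivative of the extension in a negative space, and Aubin--Lions then gives strong convergence in $L^2L^2$. For $\widetilde\ww_\e$, use the Bogovskii operator of Lemma \ref{lem-div} with $q=2$; its constant is $1+\e^{(\a-3)/2}\le C$ since $\a>3$. This lets us define divergence-free test functions on $\Omega_\e$ from test functions on $\Omega$, and hence bound $\partial_t\widetilde\ww_\e$ in $L^{4/3}(W^{-1,2}_\sigma)$, using that the convective and capillary terms carry the factor $\sqrt{\lambda_\e}$. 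Lemma \ref{lem-AL} combined with the $H^1$ bound then upgrades to strong $L^2L^2$ convergence of $\widetilde\ww_\e$, as in Section \ref{Convergence of the nonlinear convective term}.

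\textbf{Passage to the limit.} We pass to the limit in the weak formulations written in the homogenized domain (Section \ref{Equation in homogenized domain}). The terms $\sqrt{\lambda_\e}\,\ww_\e\otimes\ww_\e$ and $\sqrt{\lambda_\e}\,\phi_\e\nabla\mu_\e$ are bounded in $L^1$ uniformly in $\e$, so they tend to zero. The viscous term converges because $\nu(\widetilde\phi_\e)\to\nu(\phi)$ strongly: $\nu$ is Lipschitz and bounded, and we combine strong $L^2$ convergence of $\widetilde\phi_\e$ with weak convergence of $D(\widetilde\ww_\e)$. The same argument handles $M(\widetilde\phi_\e)\nabla\widetilde\mu_\e$. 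The convective term $\sqrt{\lambda_\e}\,\phi_\e\ww_\e$ in the phase equation vanishes. For $F'(\widetilde\phi_\e)\to F'(\phi)$ we use strong convergence together with the $L^\infty L^6$ bound and Vitali's theorem.

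\textbf{Main obstacle.} The delicate step is the error from the holes. Test functions on $\Omega$ must be corrected near the $T_{\e,k}$, using cutoffs on $B(x_{\e,k},b_1a_\e)$ and the Bogovskii correction. The resulting remainders are of order $\e^{(\a-3)/2}\to0$ in $H^1$, which is exactly the subcritical regime. For the phase and chemical-potential equations this is easier: the Neumann conditions mean test functions need no correction, and the integrals over the holes vanish since $|\Omega\setminus\Omega_\e|\le C\e^{3\a-3}\to0$. Finally, the initial data are recovered from the weak formulation and the weak time continuity of the limits.
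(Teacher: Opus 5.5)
Your plan follows the paper's route: rescaling by $\sqrt{\lambda_\e}$, energy bounds, Bogovskii-corrected test functions, Lemma \ref{lem-AL}, then passage to the limit. The uniform bounds and the limit passage are fine. The gap is in the compactness step for $\widetilde\ww_\e$.

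You claim the corrected test functions give a bound on $\pa_t\widetilde\ww_\e$ in $L^{4/3}(W^{-1,2}_\sigma)$, but this does not follow from that construction. For a solenoidal $\bvp$ on $\Omega$, the admissible function $\bvp_1=g_\e\bvp-\calB_\e(\bvp\cdot\nabla g_\e)$ differs from $\bvp$ on the region around the holes, where $\widetilde\ww_\e\neq 0$. Hence
\[
\langle\pa_t\widetilde\ww_\e,\bvp\rangle=-\int_0^T\!\!\int_{\Omega_\e}\ww_\e\cdot\pa_t\bvp_1\,\dd x\,\dd t-\int_0^T\!\!\int_{\Omega}\widetilde\ww_\e\cdot\pa_t\big((1-g_\e)\bvp+\calB_\e(\bvp\cdot\nabla g_\e)\big)\,\dd x\,\dd t .
\]
The first term is controlled by the equation. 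The second can only be bounded by $C\e^{\sigma}\|\pa_t\bvp\|_{L^{4/3}L^2}$, which is exactly estimate \eqref{pat w varphi}. It is small, but it involves $\pa_t\bvp$, so it is not bounded by any spatial norm of $\bvp$. Therefore the time-derivative hypothesis of Lemma \ref{lem-AL} is not verified for $\widetilde\ww_\e$ itself.

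The missing idea is the paper's splitting:
\begin{itemize}
\item read \eqref{pat w varphi} as $\widetilde\ww_\e=\widetilde\ww_\e^{(1)}+\e^\sigma\widetilde\ww_\e^{(2)}$, with $\pa_t\widetilde\ww_\e^{(1)}$ bounded in $L^{4/3}(W^{-1,r_1'})$ and $\widetilde\ww_\e^{(2)}$ bounded in $L^4L^2$;
\item apply Lemma \ref{lem-AL} to $\widetilde\ww_\e^{(1)}\otimes\widetilde\ww_\e$, and check that $\e^\sigma\widetilde\ww_\e^{(2)}\otimes\widetilde\ww_\e\to 0$;
\item test with the identity matrix to obtain $\|\widetilde\ww_\e\|_{L^2L^2}\to\|\ww\|_{L^2L^2}$.
\end{itemize}

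The same problem affects your argument for $\widetilde\phi_\e$. Testing the phase equation controls the time derivative of $\phi_\e$ on $\Omega_\e$ only. It says nothing about the time derivative of $E^\e\phi_\e$ inside the holes, because $E^\e$ is defined only on $H^1(\Omega_\e)$ and does not act on $\pa_t\phi_\e$. So ``testing with $E^\e$-type functions'' gives no bound on $\pa_t\widetilde\phi_\e$.

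The paper instead writes $\widetilde\phi_\e=g_\e\widetilde\phi_\e+(1-g_\e)\widetilde\phi_\e$. The first piece is supported in $\Omega_\e$, so testing the equation with $g_\e\varphi$ bounds its time derivative, and Aubin--Lions is applied only to this piece. The second piece is negligible, for example
\[
\|(1-g_\e)\widetilde\phi_\e\|_{L^\infty L^2}\le\|1-g_\e\|_{L^3}\|\widetilde\phi_\e\|_{L^\infty L^6}\le C\e^{\a-1}
\]
by \eqref{est-g}.

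In both cases you need to split off a part concentrated near the holes that is small in norm but whose time derivative is not controlled, and run the compactness argument on the rest.
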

\begin{remark}
The problem \eqref{limit eq-coro}--\eqref{boundary condition for limit eq-coro} is a Stokes--Cahn--Hilliard (SCH) system, whose weak solution is defined in a similar way as it in Definition \ref{def weak solution}.
\end{remark}

\section{Uniform bounds}\label{sec:unifbds}
In this section, we derive uniform estimates for the sequence $(\vu_\e,\phi_\e,\mu_\e)$.
From the energy inequality \eqref{energy inequality}, we have for a.a. $t\in[0,T)$,
\ba\nn
&\mathcal E_\e(\vu_\e(t),\phi_\e(t))
+\int_{0}^{t}\int_{\Omega_\e}(  \nu_1 |D(\vu_\e)|^2+\lambda_\e M_1 |\nabla \mu_\e|^2 )\dd x\dd \t\\
&\quad\leq \mathcal E_\e(\vu_\e(t),\phi_\e(t))
+\int_{0}^{t}\int_{\Omega_\e}(  \nu(\phi_\e) |D(\vu_\e)|^2+\lambda_\e M(\phi_\e) |\nabla \mu_\e|^2 )\dd x\dd \t\\
&\quad\leq \mathcal E_\e(\vu_\e(0),\phi_\e(0))
+\int_{0}^{t}\int_{\Omega_\e}  \mathbf g_\e \cdot \vu_\e \dd x\dd \t\\
&\quad\leq \mathcal E_\e(\vu_\e^0,\phi_\e^0)
+\int_{0}^{t}  \frac{a}{2}\|\mathbf g_\e(\t)\|_{L^2(\Omega_\e)}^2+ \frac{1}{2a}\|\vu_\e(\t)\|_{L^2(\Omega_\e)}^2 \dd \t\\
&\quad\leq \mathcal E_\e(\vu_\e^0,\phi_\e^0)
+\int_{0}^{t}  \frac{a}{2}\|\mathbf g_\e(\t)\|_{L^2(\Omega_\e)}^2+ \frac{C_1}{2a}\|D(\vu_\e)(\t)\|_{L^2(\Omega_\e)}^2 \dd \t.
\ea
Here, the total energy is given as
\be\nn
\mathcal E_\e(\vu_\e(t),\phi_\e(t)):=\frac{1}{2}\|\vu_\e(t)\|_{L^2(\O_\e)}^2+\frac{\lambda_\e}{2}\|\nabla \phi_\e(t)\|_{L^2(\O_\e)}^2+\lambda_\e\int_{\Omega_\e} F(\phi_\e(t)) \dd x,
\ee
and the constant $C_1$ is from Lemmas \ref{Poincare inequality} and \ref{Korn inequality}, which is independent of $\e$.
Taking $a=C_1/\nu_1$, we deduce that
\ba\nn
&\frac{1}{2}\|\vu_\e(t)\|_{L^2(\O_\e)}^2+\frac{\lambda_\e}{2}\|\nabla \phi_\e(t)\|_{L^2(\O_\e)}^2+\lambda_\e\int_{\Omega_\e} F(\phi_\e(t)) \dd x
+\int_{0}^{t}\int_{\Omega_\e}(  \frac{\nu_1}{2} |D(\vu_\e)|^2+ \lambda_\e M_1 |\nabla \mu_\e|^2 )\dd x\dd \t\\
&\quad\leq \frac{1}{2}\|\vu_\e^0\|_{L^2(\O_\e)}^2+\frac{\lambda_\e}{2}\|\nabla \phi_\e^0\|_{L^2(\O_\e)}^2+\lambda_\e\int_{\Omega_\e} F(\phi_\e^0) \dd x+  \frac{C_1}{2\nu_1}\|\mathbf g_\e\|_{L^2((0,T)\times\Omega_\e)}^2.
\ea
By \eqref{Fs}, there holds
\be\nn
\int_{\Omega_\e} F(\phi_\e(t)) \dd x \geq 0,
\ee
which combined with Assumption \ref{Assumption} gives
\ba\label{uni-bound}
&\|\vu_\e\|_{L^\infty L^2}+\|D(\vu_\e)\|_{L^2 L^2}\leq C \sqrt{\lambda_\e},\\
&\|\nabla\phi_\e\|_{L^\infty L^2}+\|\nabla\mu_\e\|_{L^2 L^2}+\int_{\Omega_\e} F(\phi_\e(t)) \dd x \leq C.
\ea
Combining \eqref{F bound} and \eqref{uni-bound}, we can deduce
\be\nn
\int_{\Omega_\e}|\phi_\e(t)|^2 \dd x\leq \int_{\Omega_\e}\frac{3}{4}+  F(\phi_\e(t)) \dd x\leq C,\quad \text{a.a. } t\in[0,T),
\ee
and thus
\be\label{phi e LinftyH1}
\|\phi_\e\|_{L^\infty H^1}\leq C.
\ee

Since $C_c^\infty((0,T)\times \Omega_\e)$ is dense in $L^2(0,T;H^1(\Omega_\e))$, \eqref{def-weak-chemical} holds for functions in $L^2(0,T;H^1(\Omega_\e))$. Taking $\mu_\e\in L^2(0,T;H^1(\Omega_\e))$ as a test function in \eqref{def-weak-chemical}, we derive
\ba\nn
\int_0^T\int_{ \Omega_\e}  |\mu_\e|^2 \dd x \dd t&=
\left|\int_0^T\int_{ \Omega_\e} \nabla\phi_\e \cdot \nabla\mu_\e \dd x\dd t+
\int_0^T\int_{ \Omega_\e}  F'(\phi_\e) \mu_\e \dd x\dd t \right|\\
&\leq \left|\int_0^T\int_{ \Omega_\e} \nabla\phi_\e \cdot \nabla\mu_\e \dd x\dd t\right|+  \frac{1}{2}\int_0^T\int_{ \Omega_\e}  |F'(\phi_\e)|^2\dd x\dd t+ \frac{1}{2}\int_0^T\int_{ \Omega_\e}  | \mu_\e|^2 \dd x\dd t,
\ea
which combined with \eqref{uni-bound} and \eqref{phi e LinftyH1}, implies that
\ba\nn
\|\mu_\e\|_{L^2 L^2}^2
&\leq C\|\nabla\phi_\e\|_{L^2L^2}\|\nabla\mu_\e\|_{L^2L^2}+  \int_0^T\int_{ \Omega_\e}  |F'(\phi_\e)|^2\dd x\dd t\\
&\leq C\|\nabla\phi_\e\|_{L^2L^2}\|\nabla\mu_\e\|_{L^2L^2}+  C\int_0^T\int_{ \Omega_\e}  (|\phi_\e|^6+1)\dd x\dd t\\
&\leq C\|\nabla\phi_\e\|_{L^2L^2}\|\nabla\mu_\e\|_{L^2L^2}+  C\|\phi_\e\|_{L^\infty H^1}^6+C\\
&\leq C.
\ea
Together with \eqref{uni-bound}, we can deduce
\be\label{mu e L2H1}
\|\mu_\e\|_{L^2 H^1}\leq C.
\ee

Combining \eqref{uni-bound}--\eqref{mu e L2H1}, we obtain
\ba\nn
&\|\vu_\e\|_{L^\infty L^2}+\|D(\vu_\e)\|_{L^2 L^2}\leq C \sqrt{\lambda_\e},\\
&\|\phi_\e\|_{L^\infty H^1}+\|\mu_\e\|_{L^2 H^1}+\int_{\Omega_\e} F(\phi_\e(t)) \dd x \leq C.
\ea
Exploiting the properties of zero extension and extension operator in Lemma \ref{extension lemma}, we deduce
\ba\label{uni-bound-tilde}
&\|\widetilde\vu_\e\|_{L^\infty L^2}+\|D(\widetilde\vu_\e)\|_{L^2 L^2}\leq C \sqrt{\lambda_\e},\\
&\|\widetilde\phi_\e\|_{L^\infty H^1}+\|\widetilde\mu_\e\|_{L^2 H^1}+\int_{\Omega} F(\widetilde\phi_\e(t)) \dd x \leq C.
\ea
Up to a subsequence, the following weak convergences hold:
\ba \label{Convergence of u}
&\widetilde\vu_\e \to \vu \ \mbox{weakly-(*) in}\ L^\infty(0,T; L^2(\Omega; \R^3)) \ \mbox{and weakly in}\ L^2(0,T; H^1_0(\Omega;\R^3)),\\
&\widetilde\phi_\e \to \phi \ \mbox{weakly-(*) in}\ L^\infty(0,T; H^1(\Omega)),\\
&\widetilde\mu_\e \to \mu \ \mbox{weakly in}\ L^2(0,T; H^1(\Omega)).
\ea
Furthermore, the divergence free condition $\dive \uu=0$ follows from $\dive \widetilde\uu_\e=0$.

\section{Equations in homogenized domain}\label{Equation in homogenized domain}
In this section, we will find equations satisfied by $(\widetilde\vu_\e,\widetilde\phi_\e,\widetilde\mu_\e)$ in homogenized domain.
\subsection{Momentum equations in homogenized domain}\label{Momentum equations in homogenized domains}
To find the limit momentum equations in $(0,T)\times\Omega$, we shall deal with divergence-free test function $\bm\varphi \in C_c^\infty([0,T)\times \Omega; \mathbb{R}^3)$, which is not supported in $[0,T)\times\Omega_\e$ generally. To take advantage of weak formulation \eqref{def-weak-momentum}, we decompose $\bm\varphi$ into
\be\nn
\bm\varphi = {g_\varepsilon }\bm\varphi + (1-{g_\varepsilon})\bm\varphi,
\ee
where $\{ g_\varepsilon\} _{\varepsilon>0}$ is a family of functions that vanish on the holes $\Omega\backslash \Omega_\e$ and converge to $1$ strongly in $W^{1,q}(\Omega)$, for some $q>2$. 
The integral related to $(1-g_\varepsilon)\bm\varphi$ can be controlled uniformly. Meanwhile, we have $g_\e \bvp\in C_c^\infty ([0,T)\times \Omega; \mathbb{R}^3)$. However, $\dive (g_{\e} \bm\varphi) \neq 0$ means that $g_{\e} \bm\varphi$ fails to be an admissible test function for \eqref{def-weak-momentum}. To overcome this technical gap, we employ a Bogovskii type operator defined on $\Omega_{\e}$, see Lemma \ref{lem-div}. 
We obtain the following proposition. 
\begin{proposition}\label{moment-equa}
Under the assumptions in Theorem \ref{Main theorem}, 
there exist $\GG_\varepsilon\in {\cal D}'((0,T)\times\Omega;\mathbb{R}^3)$, such that for any ${\bm\varphi}\in C_c^\infty([0,T)\times \Omega;\mathbb{R}^3)$ with $\dive {\bm\varphi}=0$, there holds
\ba\label{momentum eq-1}
&-\int_{0}^T\int_{\Omega}   \widetilde\uu_\e \cdot \pa_t \bm\varphi \dd x \dd t
-\int_0^T\int_{\Omega}  \widetilde\uu_\e\otimes\widetilde\uu_\e:\nabla \bm\varphi \dd x \dd t+
\int_0^T\int_{\Omega} \nu (\widetilde\phi_\e) D(\widetilde\uu_\e):D(\bm\varphi)\dd x \dd t\\
&\quad+\int_0^T\int_{\Omega} \lambda_\e \widetilde\phi_\e \nabla \widetilde\mu_\e\cdot \bm\varphi\dd x \dd t
=\int_0^T\int_{\Omega} \mathbf g_\e\cdot \bm\varphi \dd x \dd t
+\int_{\Omega}\widetilde\uu_\e^0 \cdot  \bm\varphi(0) \dd x
+\l\GG_\varepsilon, \bvp \r,
\ea
with
\begin{equation}\label{est-GG}
\left|\l\GG_\varepsilon, \bvp \r \right| \leq C\e^{\sigma} \lambda_\e^{1\over2}(\|\pa_t \bvp\|_{L^{4\over3} L^2}+ \|\nabla\bvp\|_{L^4 L^{r_1}}+ \|\bvp(0)\|_{L^{r_2}}).
\end{equation}
Here, $\sigma=\frac{(3-q)\alpha-3}{q}>0$ for some $q\in(2,3)$ close to $2$, $r_1\in (2,3)$ and $r_2>6$ are given in \eqref{q-r-1-r-2}.
\end{proposition}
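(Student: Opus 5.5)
The plan is to follow the decomposition already announced: write $\bvp = g_\e\bvp + (1-g_\e)\bvp$, correct $g_\e\bvp$ by the Bogovskii operator of Lemma \ref{lem-div}, and define $\GG_\e$ as whatever the correction and the remainder $(1-g_\e)\bvp$ produce.

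For the cut-off I take the usual capacity-type functions: $g_\e = 0$ on each $T_{\e,k}$, $g_\e = 1$ outside $\bigcup_k B(x_{\e,k}, b_1 a_\e)$, with $|\nabla g_\e|\le C a_\e^{-1}$ on the annuli. Summing over the $O(\e^{-3})$ holes gives
\[
\|1-g_\e\|_{L^s(\Omega)}\le C\e^{(3\a-3)/s},\qquad \|\nabla g_\e\|_{L^q(\Omega)}\le C\e^{\frac{(3-q)\a-3}{q}} = C\e^{\sigma},
\]
and $\sigma>0$ because $\a>3$ and $q$ is close to $2$.

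With these, define the admissible test function
\[
\bvp_\e := g_\e\bvp - \calB_\e\big(\dive(g_\e\bvp)\big) = g_\e\bvp - \calB_\e(\nabla g_\e\cdot\bvp).
\]
Since $\bvp$ is divergence free and $g_\e\bvp$ vanishes on $\partial\Omega_\e$, the function $\nabla g_\e\cdot\bvp(t)$ has zero mean on $\Omega_\e$, so $\calB_\e$ applies pointwise in $t$. By linearity $\calB_\e$ commutes with $\partial_t$, and \eqref{Bef} gives
\[
\|\calB_\e(\nabla g_\e\cdot\bvp)\|_{W^{1,q}_0}\le C\|\nabla g_\e\|_{L^q}\|\bvp\|_{L^\infty}\le C\e^{\sigma}\|\bvp\|_{L^\infty}.
\]
For this bound the factor $1+\e^{\sigma}$ in \eqref{Bef} is $O(1)$. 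A density argument (the weak formulation extends to $W^{1,q}_0$-valued divergence-free test functions with time derivative in $L^{4/3}L^2$) lets me plug $\bvp_\e$ into \eqref{def-weak-momentum}. Then I rewrite every integral over $\Omega_\e$ as one over $\Omega$ via the extensions, and add and subtract the same terms with $\bvp$. This yields \eqref{momentum eq-1} with
\[
\l\GG_\e,\bvp\r = \text{all terms of \eqref{def-weak-momentum} evaluated on } \bvp-\bvp_\e = (1-g_\e)\bvp + \calB_\e(\nabla g_\e\cdot\bvp).
\]
Here I use that $\widetilde\uu_\e = 0$ on the holes, and that on the holes $\widetilde\phi_\e\nabla\widetilde\mu_\e$ is only integrated against $\bvp-\bvp_\e$.

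Each term of $\GG_\e$ is estimated with \eqref{uni-bound-tilde}, using Hölder's inequality and Sobolev embedding on $\bvp-\bvp_\e$:

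\textbf{Time-derivative term.} Bounded by $\|\widetilde\uu_\e\|_{L^4L^2}\|\partial_t(\bvp-\bvp_\e)\|_{L^{4/3}L^2}\le C\sqrt{\lambda_\e}\,\e^{\sigma}(\cdots)$. This needs $L^2$ bounds of $(1-g_\e)\partial_t\bvp$ and of $\calB_\e(\nabla g_\e\cdot\partial_t\bvp)$, the latter via $W^{1,q}\subset L^2$.

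\textbf{Viscous term.} Bounded by $\nu_2\|D\widetilde\uu_\e\|_{L^2L^2}\|\nabla(\bvp-\bvp_\e)\|_{L^2L^2}$. The gradient contains $\nabla g_\e\otimes\bvp$, which must be placed in $L^q$ with $q>2$. So I do not use $L^2$ directly; instead I exploit the fact that $D\widetilde\uu_\e$ is supported in $\Omega_\e$, and obtain smallness from $|{\rm supp}\,\nabla g_\e|$ together with Hölder's inequality.

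\textbf{Convective term.} Bounded by $\|\widetilde\uu_\e\|^2_{L^4 L^{3}}\le C\lambda_\e$ (interpolation between $L^\infty L^2$ and $L^2L^6$), paired with $\nabla(\bvp-\bvp_\e)$ in $L^2L^{3}$ or a related space. This is where the exponents $r_1$ and $L^4$ in time enter.

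\textbf{Capillary term.} $\lambda_\e\|\widetilde\phi_\e\|_{L^\infty L^6}\|\nabla\widetilde\mu_\e\|_{L^2L^2}\|\bvp-\bvp_\e\|_{L^2L^3}$. Since $\lambda_\e\le C\sqrt{\lambda_\e}$, this is fine.

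\textbf{Initial-data term.} $\|\uu_\e^0\|_{L^2}\|(\bvp-\bvp_\e)(0)\|_{L^2}$ is handled in the same way, with $r_2>6$ coming from the Hölder split against $1-g_\e$.

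The main obstacle is the bookkeeping of exponents in the viscous and convective terms. The gradient of the correction is only controlled in $L^q$ with $q>2$, so the dual factor must be taken in $L^{q'}$ with $q'<2$, or the product must be split so that $\nabla g_\e$ carries the small power. This requires choosing $q$ close to $2$, and $r_1\in(2,3)$, $r_2>6$ compatibly, so that every term yields the common factor $\e^{\sigma}\sqrt{\lambda_\e}$ times one of the three norms of $\bvp$ in \eqref{est-GG}. The terms coming from $(1-g_\e)\bvp$ have even better decay, so I would fix $q$ first from the Bogovskii term and then choose $r_1$ and $r_2$ via Hölder's inequality, as in \eqref{q-r-1-r-2}.
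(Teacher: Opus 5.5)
Your architecture (cut-off $g_\e$, correction $\calB_\e(\nabla g_\e\cdot\bvp)$, and $\GG_\e$ given by the weak-formulation terms evaluated on $(1-g_\e)\bvp+\calB_\e(\nabla g_\e\cdot\bvp)$) is exactly the paper's. However, the time-derivative estimate you propose cannot yield $\e^\sigma$.

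Pairing $\widetilde\uu_\e\in L^4L^2$ with $\pa_t(\bvp-\bvp_\e)\in L^{4/3}L^2$ requires a small bound for $\|(1-g_\e)\pa_t\bvp\|_{L^2}$ in terms of $\|\pa_t\bvp\|_{L^2}$. No such bound exists, since $\|1-g_\e\|_{L^\infty}=1$. For example, take $\bvp=\chi(t)\bm\psi(x)$ with $\bm\psi$ concentrated where $g_\e\le\frac12$ and $\chi$ rapidly oscillating with $\chi(0)=0$; the other two norms in \eqref{est-GG} are then negligible.

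The Bogovskii part fails as well. Your route ``via $W^{1,q}\subset L^2$'' with $q>2$ needs $\nabla g_\e\cdot\pa_t\bvp\in L^q$, hence $\pa_t\bvp\in L^s$ for some $s>2$, which \eqref{est-GG} does not provide. Going through $W^{1,6/5}\subset L^2$ instead needs $\|\nabla g_\e\|_{L^3}$, which is of order $\e^{-1}$.

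The extra integrability must come from the velocity. Use $\|\widetilde\uu_\e\|_{L^4L^3}\le C\sqrt{\lambda_\e}$ (the interpolation you already use for the convective term) and measure $\pa_t(\bvp-\bvp_\e)$ in $L^{4/3}L^{3/2}$. Then $\|(1-g_\e)\pa_t\bvp\|_{L^{3/2}}\le\|1-g_\e\|_{L^6}\|\pa_t\bvp\|_{L^2}\le C\e^{(\a-1)/2}\|\pa_t\bvp\|_{L^2}$. For the correction, apply Lemma \ref{lem-div} in $W^{1,r_4}$ with $\frac1{r_4}=\frac1q+\frac12$ ($r_4$ near $1$, bounded constant), followed by $W^{1,r_4}_0\subset L^{r_3}$ with $\frac1{r_3}=\frac1{r_4}-\frac13<\frac23$. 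This gives $\|\calB_\e(\nabla g_\e\cdot\pa_t\bvp)\|_{L^{r_3}}\le C\|\nabla g_\e\|_{L^q}\|\pa_t\bvp\|_{L^2}\le C\e^\sigma\|\pa_t\bvp\|_{L^2}$. This is precisely how the paper handles $I_1$ and $I_7$.

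Your tentative convective pairing has the same critical-exponent defect. Pairing $\|\widetilde\uu_\e\|_{L^4L^3}^2$ against $\nabla(\bvp-\bvp_\e)\in L^2L^3$ puts $\nabla g_\e$ in $L^3$ or higher and uses Lemma \ref{lem-div} at exponent $3$; each of these costs $\e^{-1}$. Use instead $\|\widetilde\uu_\e\|_{L^2L^6}\|\widetilde\uu_\e\|_{L^4L^3}\|\nabla(\bvp-\bvp_\e)\|_{L^4L^2}$ and keep every gradient in $L^2$. Lemma \ref{lem-div} at exponent $2$ has constant $1+\e^{(\a-3)/2}\le C$, and
$\|\nabla g_\e\otimes\bvp\|_{L^2}+\|\calB_\e(\nabla g_\e\cdot\bvp)\|_{H^1}\le C\|\nabla g_\e\|_{L^q}\|\bvp\|_{L^{r_2}}$, while $\|(1-g_\e)\nabla\bvp\|_{L^2}\le\|1-g_\e\|_{L^{q^*}}\|\nabla\bvp\|_{L^{r_1}}$. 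Finally $\|\bvp\|_{L^{r_2}}\le C\|\nabla\bvp\|_{L^{r_1}}$ by Sobolev.

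These same bounds settle the viscous term directly; no support argument for $D\widetilde\uu_\e$ is needed. They also replace your $W^{1,q}$ bound in terms of $\|\bvp\|_{L^\infty}$, which is unusable because $\|\bvp\|_{L^\infty}$ is not controlled by the norms in \eqref{est-GG}.
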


\begin{proof} 
Let ${\bm\varphi}\in C_c^\infty([0,T)\times \Omega;\mathbb{R}^3)$ with $\dive {\bm\varphi}=0$. Combining the distribution of holes in \eqref{defi-holes-1}--\eqref{defi-holes-2}, there exist cut-off functions $\{ g_\varepsilon \}_{\varepsilon>0} \subset C^\infty(\R^{3})$ such that $0\leq g_{\e} \leq 1$ and
\begin{equation}\label{defi-g}
 g_\varepsilon= 0 \ \mbox{on} \ \bigcup_{k \in K_\varepsilon} B(x_{\e,k},\delta_0 \varepsilon^\alpha),\quad g_\varepsilon = 1 \ \mbox{on} \ (\bigcup_{k \in K_\varepsilon} B(x_{\e,k},\delta_1 \varepsilon^\alpha))^c,\quad |\nabla g_{\e}| \leq C \e^{-\alpha}.
\end{equation}
Then for each $1\leq q \leq \infty$, there hold
\begin{equation}\label{est-g}
\| g_\varepsilon- 1\| _{L^q(\R^{3})} \leq C \varepsilon^{\frac{3\alpha-3}{q}},\quad
\| \nabla g_\varepsilon\| _{L^q(\R^{3})} \leq C\varepsilon ^{\frac{3\alpha-3}{q} -\alpha}.
\end{equation}

Since ${\bm\varphi}$ is not supported in $[0,T)\times \Omega_\e$, to find a proper test function in weak formulation \eqref{def-weak-momentum}, we firstly decompose $\bvp$ into $\bvp = {g_\varepsilon }\bvp + (1-{g_\varepsilon})\bvp$, then we have
\ba\nn
\l\GG_\varepsilon, \bvp \r
=&-\int_{0}^T \int_{\Omega} \widetilde\uu_\e \cdot \pa_t(g_\e\bm\varphi) \dd x \dd t
-\int_0^T\int_{\Omega}  \widetilde\uu_\e\otimes\widetilde\uu_\e:\nabla (g_\e\bm\varphi) \dd x \dd t\\
&\quad+\int_0^T\int_{\Omega} \nu (\widetilde\phi_\e) D(\widetilde\uu_\e):D(g_\e\bm\varphi)\dd x \dd t
+\int_0^T\int_{\Omega} \lambda_\e \widetilde\phi_\e \nabla \widetilde\mu_\e\cdot (g_\e\bm\varphi) \dd x \dd t\\
&\quad-\int_0^T\int_{\Omega} \mathbf g_\e\cdot (g_\e\bm\varphi) \dd x \dd t
-\int_{\Omega} \vu_\e^0 \cdot g_\e\bm\varphi(0) \dd x
+\sum_{i=1}^{6} {I_i},
\ea
with
\ba\nn
I_1 & = -\int_{0}^T \int_{\Omega} \widetilde\uu_\e \cdot\pa_t( (1-g_\e)\bm\varphi) \dd x \dd t, 
&&I_2  = -\int_0^T\int_{\Omega}  \widetilde\uu_\e\otimes\widetilde\uu_\e:\nabla ((1-g_\e)\bm\varphi) \dd x \dd t ,\\
I_3 &= \int_0^T\int_{\Omega} \nu (\widetilde\phi_\e) D(\widetilde\uu_\e):D((1-g_\e)\bm\varphi)\dd x \dd t ,
&&I_4 =\int_0^T\int_{\Omega} \lambda_\e \widetilde\phi_\e \nabla \widetilde\mu_\e\cdot ((1-g_\e)\bm\varphi) \dd x \dd t,\\
I_5 &=-\int_0^T\int_{\Omega} \mathbf g_\e\cdot ((1-g_\e)\bm\varphi) \dd x \dd t,
&&I_6 =-\int_{\Omega} \vu_\e^0 \cdot(1-g_\e)\bm\varphi(0) \dd x.
\ea
By virtue of \eqref{defi-g}, we can derive
\ba\nn
\int_{\Omega_\e} \dive ( g_{\e}\bvp) \dx = \int_{\Omega_{\e}} \bvp \cdot \nabla g_{\e}\dx=\int_{\Omega} \bvp \cdot \nabla g_{\e}\dx = \int_{\Omega} \dive ( g_{\e}\bvp) \dx = 0,
\ea
which yields $\bvp \cdot \nabla  g_{\e}\in L^{\tilde q}_0(\Omega_\e)$ for $\tilde q>1$ and $t\in (0,T)$. 
Since $g_\e\bvp$ is not divergence-free, we apply Lemma \ref{lem-div} and introduce
\ba\nn
\bvp_{1} := g_\e\bvp - \bvp_{2},\quad \bvp_{2} := \calB_{\e} (\dive(\bvp g_{\e})) = \calB_{\e} (\bvp \cdot \nabla g_\e),
\ea
then a direct computation yields $\dive {\bm\varphi_1}=0$. By virtue of weak formulation \eqref{def-weak-momentum}, we deduce
\begin{align*}
\l\GG_\varepsilon, \bvp \r=
&-\int_{0}^T \int_{\Omega_\e}   \uu_\e \pa_t\bm\varphi_1 \dd x \dd t
 -\int_0^T\int_{\Omega_\e}  \uu_\e\otimes \uu_\e:\nabla \bm\varphi_1 \dd x \dd t+
 \int_0^T\int_{\Omega_\e} \nu (\phi_\e) D(\uu_\e):D(\bm\varphi_1)\dd x \dd t\\
&
\quad+ \int_0^T\int_{\Omega_\e} \lambda_\e \phi_\e \nabla \mu_\e\cdot \bm\varphi_1 \dd x \dd t
- \int_0^T\int_{\Omega_\e} \mathbf g_\e\cdot \bm\varphi_1 \dd x \dd t
-\int_{\Omega_\e} \vu_\e^0 \cdot \bm\varphi_1(0) \dd x
+\sum_{i=1}^{6} {I_i}+\sum_{ i=7}^{12} {I_i}\\
=&\sum_{ i=1}^{6} {I_i} +\sum_{ i=7}^{12} {I_i},
\end{align*}
with
\begin{align*}
I_7 &=-\int_{0}^T \int_{\Omega_\e}  \uu_\e \cdot\pa_t\bm\varphi_2 \dd x \dd t,
&&I_8 =  -\int_0^T\int_{\Omega_\e} \widetilde\uu_\e\otimes\widetilde\uu_\e:\nabla \bm\varphi_2 \dd x \dd t, \\
I_9 &= \int_0^T\int_{\Omega_\e} \nu (\phi_\e) D(\uu_\e):D(\bm\varphi_2)\dd x \dd t,
&&I_{10} = \int_0^T\int_{\Omega_\e} \lambda_\e \phi_\e \nabla \mu_\e\cdot \bm\varphi_2 \dd x \dd t,\\
I_{11} &=- \int_0^T\int_{\Omega_\e}\mathbf g_\e\cdot \bm\varphi_2 \dd x \dd t,
&&I_{12} =-\int_{\Omega_\e} \vu_\e^0 \cdot \bm\varphi_2(0) \dd x.
\end{align*}

To derive uniform bounds on $\l\GG_\varepsilon, \bvp \r$, we proceed to establish uniform estimates for each integral $I_{i} \ (1\leq i\leq 12)$.  

Since $\alpha>3$, there exists $q\in (2,3)$ close to $2$, such that
\be\nn
\sigma:=\frac{(3-q)\alpha-3}{q}>0.
\ee 
By \eqref{uni-bound-tilde} and \eqref{est-g}, together with interpolation inequality and Sobolev embedding theorem, we deduce
\ba\nn
|I_1| &\leq C \| \widetilde \uu_\e\|_{L^4 L^3}\|1-g_\e\|_{L^6}  \|\pa_t\bm\varphi\|_{L^{4\over3} L^2}\\
&\leq C\| \widetilde \uu_\e\|_{L^\infty L^2}^{1\over2} \| \widetilde \uu_\e\|_{L^2 L^6}^{1\over2}  \|1-g_\e\|_{L^6}  \|\pa_t\bm\varphi\|_{L^{4\over3} L^2}\\
&\leq C \lambda_\e^{1\over2}  \e^{\frac{\alpha-1}{2}} \|\pa_t\bm\varphi\|_{L^{4\over3} L^2}
\ea
We use interpolation inequality and Sobolev embedding again to obtain
\be\nn
|I_2| \leq C \|\widetilde \uu_\e\|_{L^2 L^6} \|\widetilde \uu_\e\|_{L^4 L^3}
(\|1-g_\e\|_{L^{q^*}}\|\nabla\bm\varphi\|_{L^4 L^{r_1}}+ \|\nabla g_\e\|_{L^q}\|\bm\varphi\|_{L^4 L^{r_2}})
\leq C\lambda_\e \e^{\sigma}\|\nabla\bm\varphi\|_{L^4 L^{r_1}},
\ee
where $q^*$, $r_1$ and $r_2$ satisfy
\be\label{q-r-1-r-2}
\frac{1}{q^*}=\frac{1}{q}-\frac{1}{3}<\frac{1}{6},\quad  \frac{1}{r_1}=\frac{1}{2}-\frac{1}{q^*}>\frac{1}{3},
\quad \frac{1}{r_2}=\frac{1}{2}-\frac{1}{q}=\frac{1}{6}-\frac{1}{q^*}=\frac{1}{r_1}-\frac{1}{3}.
\ee
Similarly, by \eqref{assumption-g}, \eqref{uni-bound-tilde} and \eqref{est-g}, we can deduce
\ba\nn
|I_3| &\leq C\nu_2 \|D(\widetilde \vu_\e)\|_{L^2 L^2}\|D((1-g_\e)\bm\varphi)\|_{L^2 L^2}\\
&\leq C\|D(\widetilde \vu_\e)\|_{L^2 L^2}( \|1-g_\e\|_{L^{q^*}} \|\nabla\bm\varphi\|_{L^4 L^{r_1}}+\|\nabla g_\e\|_{L^q}\|\bm\varphi\|_{L^4 L^{r_2}})\\
&\leq C\lambda_\e^{1\over2} \e^{\sigma}\|\nabla\bm\varphi\|_{L^4 L^{r_1}},
\ea
\ba\nn
|I_4| \leq C \lambda_\e \|\widetilde\phi_\e\|_{L^\infty L^6} \|\nabla\widetilde\mu_\e\|_{L^2 L^2} \|1-g_\e\|_{L^{q^*}}\|\bm\varphi\|_{L^4 L^{r_2}}
\leq C\lambda_\e \e^{\sigma}\|\nabla\bm\varphi\|_{L^4 L^{r_1}},
\ea
\ba\nn
|I_5| \leq C\|\mathbf g_\e \|_{L^2 L^2} \|1-g_\e\|_{L^{q^*}}\|\bm\varphi\|_{L^4 L^{r_2}}
\leq C\lambda_\e^{\frac{1}{2}} \e^{\sigma}\|\nabla\bm\varphi\|_{L^4 L^{r_1}},
\ea
and
\ba\nn
|I_6|\leq C\|\vu_\e^0\|_{L^2} \|1-g_\e\|_{L^q} \|\bvp(0)\|_{L^{r_2}}
\leq C \lambda_\e^{1\over2}  \e^{\sigma}\|\bvp(0)\|_{L^{r_2}}.
\ea

Next, we will use the properties of the Bogovskii operator $\calB_{\e}$ in Lemma \ref{lem-div} to estimate $I_{i}$ $(7\leq i \leq12)$. Notice that
\be\nn
 \int_{\Omega_{\e}} \pa_t\bvp \cdot \nabla g_{\e}\dx=\int_{\Omega}\pa_t \bvp \cdot \nabla g_{\e}\dx = \int_{\Omega} \dive ( g_{\e}\pa_t\bvp) \dx = 0,
\ee
and the Bogovskii operator $\calB_{\e}$ only acts on spatial variable, there holds
 \be\nn
 \pa_t \bvp_2=\pa_t \calB_{\e} (\bvp \cdot \nabla g_\e)=\calB_{\e} (\pa_t\bvp \cdot \nabla g_\e).
 \ee
Since $q>2$ is close to $2$, we can choose $r_3>\frac{3}{2}$ close to $\frac{3}{2}$ and $r_4>1$ close to $1$ such that
\be\nn
\frac{1}{r_3}=\frac{1}{r_4}-\frac{1}{3}, \quad \frac{1}{r_4}=\frac{1}{q}+\frac{1}{2}.
\ee
Utilizing \eqref{Bef}, \eqref{uni-bound-tilde}, \eqref{est-g} and Sobolev embedding, we deduce
 \ba\nn
|I_7| &\leq C\| \vu_\e\|_{L^4 L^3} \|\pa_t\bm\varphi_2\|_{L^{4\over 3} L^{r_3}} 
= C\|\vu_\e\|_{L^4 L^3} \|\calB_{\e} (\pa_t\bvp \cdot \nabla g_\e)\|_{L^{4\over 3} L^{r_3}}  \\
&\leq C\|\vu_\e\|_{L^4 L^3} \|\calB_{\e} (\pa_t\bvp \cdot \nabla g_\e)\|_{L^{4\over 3} W^{1,r_4}} 
\leq C\|\vu_\e\|_{L^4 L^3} \|\pa_t\bvp \cdot \nabla g_\e\|_{L^{4\over 3} L^{r_4}} \\
&\leq C\|\vu_\e\|_{L^4 L^3} \|\pa_t\bvp \|_{L^{4\over 3} L^{2}} \|\nabla g_\e\|_{L^q}
\leq C\lambda_\e^{1\over2} \e^\sigma\|\pa_t\bvp \|_{L^{4\over 3} L^{2}}.
\ea
By \eqref{uni-bound}, similar arguments give
\ba\nn
|I_8|&  \leq C\|\vu_\e\|_{L^2 L^6} \|\vu_\e\|_{L^4 L^3} \|\nabla\bm\varphi_2\|_{L^4 L^2}
\leq C \lambda_\e \|\calB_{\e} (\bvp \cdot \nabla g_\e)\|_{L^4 H^1}
\leq C \lambda_\e \|\bvp \cdot \nabla g_\e\|_{L^4 L^{2}}\\
&\leq C \lambda_\e \|\bvp \|_{L^4 L^{r_2}}\| \nabla g_\e\|_{L^q}
\leq C\lambda_\e \e^{\sigma}\|\nabla\bm\varphi\|_{L^4 L^{r_1}},
\ea
\ba\nn
|I_9|  \leq C\nu_2 \|D(\vu_\e)\|_{L^2 L^2} \|D(\bvp_2)\|_{L^2 L^2}
\leq C\|D(\vu_\e)\|_{L^2 L^2} \|\calB_{\e} (\bvp \cdot \nabla g_\e)\|_{L^2 H^1}
\leq C\lambda_\e^{1\over2} \e^{\sigma}\|\nabla\bm\varphi\|_{L^4 L^{r_1}},
\ea
and
\ba\nn
|I_{10}|&  \leq C\lambda_\e \|\phi_\e\|_{L^\infty L^6} \|\nabla\mu_\e\|_{L^2 L^2} \|\bm\varphi_2\|_{L^2 L^6}
\leq C\lambda_\e \|\calB_{\e} (\bvp \cdot \nabla g_\e)\|_{L^2 H^1}
\leq C\lambda_\e \e^{\sigma}\|\nabla\bm\varphi\|_{L^4 L^{r_1}}.
\ea
Furthermore, by virtue of \eqref{assumption-g} and \eqref{est-g}, we obtain 
\ba\nn
|I_{11}|&\leq C\|\mathbf g_\e \|_{L^2 L^2} \|\bm\varphi_2\|_{L^2 L^2}
\leq C\|\mathbf g_\e \|_{L^2 L^2} \|\calB_{\e} (\bvp \cdot \nabla g_\e)\|_{L^2 H^1}
\leq C\lambda_\e^{1\over2} \e^{\sigma}\|\nabla\bm\varphi\|_{L^4 L^{r_1}},
\ea
and
\ba\nn
|I_{12}|\leq C\|\vu_\e^0\|_{L^2}\|\bm\varphi_2(0)\|_{L^2}\leq C\|\vu_\e^0\|_{L^2}\|\bvp(0)\|_{L^{r_2}}\|\nabla g_\e\|_{L^q}\leq C\lambda_\e^{1\over2} \e^{\sigma}\|\bm\varphi(0)\|_{L^{r_2}},
\ea

Summing up the estimates for $I_{i}$ $(i=1,\cdots,10)$ above, we derive
\ba\nn
|\l\GG_\varepsilon, \bvp \r| \leq C \e^{\sigma}(\lambda_\e+\sqrt{\lambda_\e})(\|\pa_t\bvp\|_{L^{4\over3} L^2}+\|\nabla\bvp\|_{L^4 L^{r_1}}+\|\bm\varphi(0)\|_{L^{r_2}}).
\ea
Since $\lambda_\e\leq C \sqrt{\lambda_\e}$, no matter $\lambda_\e\to\lambda> 0$ or $\lambda_\e\to\lambda= 0$,
this yields \eqref{est-GG}.
\end{proof}

\subsection{Phase-field equation in homogenized domain}\label{Phase-field evolution equation in homogenized domain}
In this subsection, we derive the limit phase-field equation in $(0,T)\times\Omega$. For this purpose, we analyze the weak formulation of phase-field equation satisfied by the extended variables $( \widetilde\vu_\e, \widetilde\phi_\e, \widetilde\mu_\e)$ and obtain the following proposition.
\begin{proposition}\label{phase-field-equa}
Under the assumptions in Theorem \ref{Main theorem}, 
there exist $\HH_\varepsilon\in {\cal D}'((0,T)\times\Omega)$, 
such that for any $\varphi\in C_c^\infty([0,T)\times \Omega)$, there holds
\ba\nn
&-\int_0^T\int_{\Omega}  \widetilde\phi_\e\pa_t\varphi \dd x\dd t+
\int_0^T\int_{\Omega} M(\widetilde\phi_\e) \nabla\widetilde\mu_\e\cdot\nabla \varphi \dd x \dd t\\
&\quad=\int_0^T\int_{\Omega}  \widetilde\phi_\e \widetilde\uu_\e\cdot\nabla \varphi \dd x \dd t
+\int_{\Omega}  \phi_\e^0\varphi(0) \dd x+
\l\HH_\varepsilon, \varphi \r,
\ea
with
\begin{equation}\label{est-H}
\left|\l\HH_\varepsilon, \varphi \r\right| \leq C\e^{\sigma}(\|\pa_t\varphi\|_{L^2 L^{3\over2}}+\|\nabla\varphi\|_{L^4 L^{r_1}}+ \|\varphi(0)\|_{L^{3}}).
\end{equation}
Here, $\sigma>0$ and $r_1\in (2,3)$ are the same as in Proposition \ref{moment-equa}.
\end{proposition}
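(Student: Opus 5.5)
The plan is to argue as in Proposition \ref{moment-equa}, but more simply. The test function in \eqref{def-weak-phase} need not be divergence-free, so no Bogovskii correction is needed.

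Let $\varphi\in C_c^\infty([0,T)\times\Omega)$ and let $g_\e$ be the cut-off from \eqref{defi-g}. Then $g_\e\varphi$ is smooth and compactly supported in $[0,T)\times\Omega$. It vanishes on the balls $B(x_{\e,k},\delta_0\e^\alpha)$, which contain the holes (with $\delta_0$ chosen so that $T_{\e,k}\subset B(x_{\e,k},\delta_0\e^\alpha)$). Hence its restriction to $\Omega_\e$ is an admissible test function in \eqref{def-weak-phase}. Since $g_\e\varphi$ vanishes on $\Omega\setminus\Omega_\e$, all integrals over $\Omega_\e$ in \eqref{def-weak-phase} with test function $g_\e\varphi$ equal the corresponding integrals over $\Omega$ for the extensions $(\widetilde\uu_\e,\widetilde\phi_\e,\widetilde\mu_\e)$. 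This holds even though $\widetilde\phi_\e,\widetilde\mu_\e$ do not vanish in the holes. Writing $\varphi=g_\e\varphi+(1-g_\e)\varphi$, I therefore define
\begin{align*}
\l\HH_\e,\varphi\r :=& -\int_0^T\!\!\int_\Omega \widetilde\phi_\e\,\pa_t((1-g_\e)\varphi)\dd x\dd t +\int_0^T\!\!\int_\Omega M(\widetilde\phi_\e)\nabla\widetilde\mu_\e\cdot\nabla((1-g_\e)\varphi)\dd x\dd t\\
&-\int_0^T\!\!\int_\Omega \widetilde\phi_\e\widetilde\uu_\e\cdot\nabla((1-g_\e)\varphi)\dd x\dd t -\int_\Omega\phi_\e^0(1-g_\e)\varphi(0)\dd x =: J_1+J_2+J_3+J_4 .
\end{align*}
With this choice the stated identity holds by construction. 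It remains to bound each $J_i$ using \eqref{uni-bound-tilde}, \eqref{est-g} and \eqref{bound M}, exactly as for $I_1$–$I_6$.

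\emph{$J_1$.} Since $g_\e$ is independent of time, apply H\"older with exponents $(6,6,3/2)$ in space, using $\widetilde\phi_\e\in L^\infty L^6$ (from $L^\infty H^1$):
\[
|J_1|\le C\|\widetilde\phi_\e\|_{L^\infty L^6}\,\|1-g_\e\|_{L^6}\,\|\pa_t\varphi\|_{L^2L^{3/2}}\le C\e^{\frac{\alpha-1}{2}}\|\pa_t\varphi\|_{L^2L^{3/2}}.
\]
For $q>2$ we have $\sigma<\frac{\alpha-3}{2}<\frac{\alpha-1}{2}$, so this is at most $C\e^\sigma$.

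\emph{$J_2$.} Use $\|\nabla\widetilde\mu_\e\|_{L^2L^2}\le C$, the bound $M\le M_2$, and split
\[
\nabla((1-g_\e)\varphi)=(1-g_\e)\nabla\varphi-\varphi\nabla g_\e .
\]
With the exponents from \eqref{q-r-1-r-2} this gives
\[
\|\nabla((1-g_\e)\varphi)\|_{L^2L^2}\le C\big(\|1-g_\e\|_{L^{q^*}}\|\nabla\varphi\|_{L^4L^{r_1}}+\|\nabla g_\e\|_{L^q}\|\varphi\|_{L^4L^{r_2}}\big).
\]
Since $\varphi$ has compact support, Sobolev gives $\|\varphi\|_{L^{r_2}}\le C\|\nabla\varphi\|_{L^{r_1}}$, so the right side is at most $C\e^\sigma\|\nabla\varphi\|_{L^4L^{r_1}}$.

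\emph{$J_3$.} We have $\|\widetilde\phi_\e\widetilde\uu_\e\|_{L^2L^3}\le C\|\widetilde\phi_\e\|_{L^\infty L^6}\|\widetilde\uu_\e\|_{L^2L^6}\le C\sqrt{\lambda_\e}\le C$. It then suffices to bound $\nabla((1-g_\e)\varphi)$ in $L^2L^{3/2}$, which is weaker than the $L^2L^2$ bound used for $J_2$.

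\emph{$J_4$.} Since $\phi_0\in H^1\subset L^6$, H\"older with exponents $(6,2,3)$ gives
\[
|J_4|\le C\|\phi_0\|_{L^6}\,\|1-g_\e\|_{L^2}\,\|\varphi(0)\|_{L^3}\le C\e^{\frac{3\alpha-3}{2}}\|\varphi(0)\|_{L^3}.
\]

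Summing the four bounds gives \eqref{est-H}. I do not expect any real obstacle. The points needing care are the bookkeeping of the Lebesgue exponents, since they must match the norms in \eqref{est-H} exactly, and checking that $g_\e\varphi$ is admissible in \eqref{def-weak-phase} without seeing the nonzero values of $\widetilde\phi_\e,\widetilde\mu_\e$ inside the holes. The latter holds because the support of $g_\e$ avoids the holes.
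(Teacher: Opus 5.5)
Your proposal is correct and is essentially the paper's proof. You use the same splitting $\varphi=g_\e\varphi+(1-g_\e)\varphi$ with $g_\e\varphi$ as test function in \eqref{def-weak-phase}, the same four remainders $J_1$--$J_4$, and the same H\"older/Sobolev bounds with the exponents of \eqref{q-r-1-r-2}. The only deviations (putting $\phi_0$ in $L^6$ and $1-g_\e$ in $L^2$ for $J_4$, and bounding $\widetilde\phi_\e\widetilde\uu_\e$ in $L^2L^3$ first for $J_3$) are cosmetic, and your explicit checks that $\frac{\alpha-1}{2}>\sigma$ and that the holes lie inside the balls where $g_\e$ vanishes fill in points the paper leaves implicit.
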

\begin{proof}
Let $\varphi\in C_c^\infty([0,T)\times \Omega)$. 
Exploiting $g_\e$ given in \eqref{defi-g}, we decompose $\varphi$ into 
$$\varphi  = {g_\varepsilon }\varphi + (1-{g_\varepsilon})\varphi,$$ 
then $\supp({g_\varepsilon }\varphi)\subset [0,T)\times\Omega_\e$ gives
\ba\label{Hevarphi}
\l\HH_\varepsilon, \varphi \r =
&-\int_0^T\int_{\Omega_\e}  \phi_\e\pa_t (g_\e\varphi) \dd x\dd t
 +\int_0^T\int_{\Omega_\e}  M(\phi_\e)\nabla\mu_\e\cdot\nabla (g_\e\varphi) \dd x \dd t\\
&\quad
- \int_0^T\int_{\Omega_\e}  \phi_\e \uu_\e\cdot\nabla (g_\e\varphi) \dd x \dd t
-\int_{\Omega_\e}  \phi_\e^0 g_\e\varphi(0) \dd x
+\sum_{i=1}^{4} {J_i}\\
=&\sum_{i=1}^{4} {J_i},
\ea
with
\ba\nn
J_1 & =- \int_0^T \int_{\Omega}\widetilde\phi_\e\pa_t((1-g_\e)\varphi) \dd x\dd t, 
&&J_2  =\int_0^T\int_{\Omega}  M(\widetilde\phi_\e)\nabla\widetilde\mu_\e\cdot\nabla ((1-g_\e)\varphi)\dd x \dd t,\\
J_3 & =-\int_0^T\int_{\Omega} \widetilde\phi_\e \widetilde\uu_\e\cdot\nabla ((1-g_\e)\varphi) \dd x \dd t,
&&J_4  =-\int_{\Omega}  \phi_\e^0 (1-g_\e)\varphi(0) \dd x.
\ea
We proceed to estimate each integral $J_{i}  \ (1\leq i\leq 4)$. By virtue of \eqref{uni-bound-tilde} and \eqref{est-g}, one has
\ba\nn
|J_1| \leq C \|\widetilde\phi_\e\|_{L^\infty L^6} \|1-g_\e\|_{L^{6}}      \|\pa_t\varphi\|_{L^2 L^{3\over2}}
\leq C\e^\sigma  \|\pa_t\varphi\|_{L^2 L^{3\over2}}.
\ea
Similarly, using \eqref{initial-u-phi}, \eqref{uni-bound-tilde}, \eqref{est-g} and Lemma \ref{extension lemma}, we deduce
\ba\label{J23}
&|J_2| \leq CM_2\|\nabla \widetilde \mu_\e\|_{L^2 L^2} ( \|1-g_\e\|_{L^{q^*}} \|\nabla\varphi\|_{L^4 L^{r_1}}+\|\nabla g_\e\|_{L^q}\|\varphi\|_{L^4 L^{r_2}})
\leq C \e^{\sigma}\|\nabla\varphi\|_{L^4 L^{r_1}},\\
&|J_3| \leq C\| \widetilde \phi_\e\|_{L^\infty L^6} \| \widetilde \uu_\e\|_{L^2 L^6} ( \|1-g_\e\|_{L^{q^*}} \|\nabla\varphi\|_{L^4 L^{r_1}}+\|\nabla g_\e\|_{L^q}\|\varphi\|_{L^4 L^{r_2}})
\leq C\lambda_\e^{1\over2} \e^{\sigma}\|\nabla\varphi\|_{L^4 L^{r_1}},
\ea
and
\ba\nn
|J_4| \leq C \|\phi_\e^0\|_{L^2}\|1-g_\e\|_{L^{6}}      \|\varphi(0)\|_{L^{3}}
\leq C\e^\sigma  \|\varphi(0)\|_{L^3}.
\ea
Combining all above estimates, we arrive at the desired estimate \eqref{est-H}.
\end{proof}

\subsection{Chemical-potential equation in homogenized domain}\label{Chemical-potential equation in homogenized domain}

Our next goal is to derive the homogenized equation governing the limit chemical potential in
 $(0,T)\times \Omega$. To this end, we study the weak formulation of chemical-potential equation satisfied by the extended variables $(\widetilde\mu_\e,\widetilde\phi_\e)$ and give the following proposition.
\begin{proposition}\label{chemical-equa}
Under the assumptions in Theorem \ref{Main theorem}, there exist $\KK_\varepsilon \in{\cal D}'((0,T)\times\Omega)$, such that for any $\psi\in C_c^\infty((0,\infty)\times\Omega)$, there holds
\ba\nn
\int_0^T\int_{ \Omega}  \widetilde\mu_\e \psi \dd x \dd t
-\int_0^T\int_{ \Omega} \nabla\widetilde\phi_\e \cdot \nabla\psi \dd x\dd t
=\int_0^T\int_{ \Omega}  F'(\widetilde\phi_\e) \psi \dd x\dd t
+\l\KK_\varepsilon, \psi \r,
\ea
with 
\begin{equation}\label{est-K}
|\l\KK_\varepsilon, \psi \r| \leq C \e^{\sigma}\|\nabla\psi\|_{L^2L^{r_1}}.
\end{equation}
Here, $\sigma>0$ and $r_1\in (2,3)$ are the same as in Proposition \ref{moment-equa}.
\end{proposition}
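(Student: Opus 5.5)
We follow the same route as in Propositions \ref{moment-equa} and \ref{phase-field-equa}. For $\psi\in C_c^\infty((0,T)\times\Omega)$ we split $\psi=g_\e\psi+(1-g_\e)\psi$, where $g_\e$ is the cut-off from \eqref{defi-g}. Since $g_\e\psi$ vanishes on a neighbourhood of the holes, it is admissible in \eqref{def-weak-chemical}; by density, $H^1$ regularity in space suffices there. Moreover $\widetilde\mu_\e=\mu_\e$ and $\widetilde\phi_\e=\phi_\e$ on $\Omega_\e$. We therefore define
\[
\l\KK_\e,\psi\r:=\int_0^T\int_\Omega \widetilde\mu_\e\psi\dd x\dd t-\int_0^T\int_\Omega\nabla\widetilde\phi_\e\cdot\nabla\psi\dd x\dd t-\int_0^T\int_\Omega F'(\widetilde\phi_\e)\psi\dd x\dd t .
\]
Testing \eqref{def-weak-chemical} with $g_\e\psi$ removes the part carrying $g_\e\psi$. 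The sign convention needs care: the weak form reads $\int\mu\psi=\int\nabla\phi\cdot\nabla\psi+\int F'(\phi)\psi$, and this matches the statement once the gradient term is moved to the left. We are left with
\[
\l\KK_\e,\psi\r=K_1+K_2+K_3,
\]
where
\[
K_1=\int_0^T\int_\Omega\widetilde\mu_\e(1-g_\e)\psi\dd x\dd t,\qquad
K_2=-\int_0^T\int_\Omega\nabla\widetilde\phi_\e\cdot\nabla\big((1-g_\e)\psi\big)\dd x\dd t,
\]
\[
K_3=-\int_0^T\int_\Omega F'(\widetilde\phi_\e)(1-g_\e)\psi\dd x\dd t .
\]

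Each term is estimated with \eqref{uni-bound-tilde}, \eqref{est-g}, and the exponents $q,q^*,r_1,r_2$ from \eqref{q-r-1-r-2}. Throughout, Sobolev embedding gives $\|\psi\|_{L^{r_2}}\le C\|\nabla\psi\|_{L^{r_1}}$, because $\frac1{r_2}=\frac1{r_1}-\frac13$ and $\psi$ has compact support.

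For $K_2$ we expand $\nabla((1-g_\e)\psi)=(1-g_\e)\nabla\psi-\psi\nabla g_\e$. This gives
\[
|K_2|\le \|\nabla\widetilde\phi_\e\|_{L^\infty L^2}\big(\|1-g_\e\|_{L^{q^*}}\|\nabla\psi\|_{L^2L^{r_1}}+\|\nabla g_\e\|_{L^q}\|\psi\|_{L^2L^{r_2}}\big),
\]
which is exactly the computation done for $J_2$. Here $\|\nabla g_\e\|_{L^q}\le C\e^\sigma$, and $\|1-g_\e\|_{L^{q^*}}\le C\e^{(3\a-3)/q^*}$, which is even smaller.

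For $K_1$ we use Hölder with $\widetilde\mu_\e\in L^2L^6$, $(1-g_\e)\in L^{q^*}$ and $\psi\in L^2L^{r_2}$. The exponents close because $\frac16+\frac1{q^*}+\frac1{r_2}=\frac16+\frac16=\frac13\le1$; the room left over is absorbed by the boundedness of $\Omega$. This yields $|K_1|\le C\e^\sigma\|\nabla\psi\|_{L^2L^{r_1}}$.

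For $K_3$ we use $|F'(s)|\le|s|^3+1$ and $\|\widetilde\phi_\e\|_{L^\infty L^6}\le C$, so that $F'(\widetilde\phi_\e)$ is bounded in $L^\infty L^2$. Hölder with $1-g_\e\in L^{q}$ and $\psi\in L^2L^{r_2}$ then works, since $\frac12+\frac1q+\frac1{r_2}=1$ by \eqref{q-r-1-r-2}. Because $\|1-g_\e\|_{L^q}\le C\e^{(3\a-3)/q}\le C\e^\sigma$, we get $|K_3|\le C\e^\sigma\|\nabla\psi\|_{L^2L^{r_1}}$.

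Collecting the three bounds gives \eqref{est-K}. The only delicate point is the cubic nonlinearity in $K_3$. The $L^6$ bound on $\widetilde\phi_\e$ from Lemma \ref{extension lemma} is exactly what makes $F'(\widetilde\phi_\e)$ square integrable, uniformly in $\e$, so that the smallness of $1-g_\e$ can be exploited. The remaining work is to check that the Hölder exponents close, and they do by \eqref{q-r-1-r-2}.
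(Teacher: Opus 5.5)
Your proposal is correct and follows essentially the same argument as the paper: the same splitting $\psi=g_\e\psi+(1-g_\e)\psi$, the same three remainder terms $K_1,K_2,K_3$, and Hölder estimates based on \eqref{est-g} and \eqref{uni-bound-tilde}. The only differences are cosmetic choices of Hölder exponents in $K_1$ and $K_3$ (the paper uses $\|1-g_\e\|_{L^2}$ in $K_1$, and in $K_3$ pairs $\|1-g_\e\|_{L^{q^*}}$ with $\|\psi\|_{L^2L^{r_1}}$), together with your explicit use of $\|\psi\|_{L^{r_2}}\le C\|\nabla\psi\|_{L^{r_1}}$, which the paper leaves implicit when it concludes \eqref{est-K}.
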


\begin{proof}
For $\psi\in C_c^\infty((0,\infty)\times\Omega)$, to make use of \eqref{def-weak-chemical}, we
utilize $g_\e$ given in \eqref{defi-g} to decompose $\psi$ into 
$$\psi  = {g_\varepsilon }\psi + (1-{g_\varepsilon})\psi,$$ 
then we have
\ba\nn
\l\KK_\varepsilon, \psi \r
&=\int_0^T\int_{ \Omega_\e}  \mu_\e (g_\e\psi) \dd x \dd t
-\int_0^T\int_{ \Omega_\e} \nabla\phi_\e \cdot \nabla (g_\e\psi) \dd x\dd t
-\int_0^T\int_{ \Omega_\e}  F'(\phi_\e) (g_\e\psi) \dd x\dd t+\sum_{i=1}^{3} {K_i}\\
&=\sum_{i=1}^{3} {K_i},
\ea
with
\ba\nn
K_1 & =\int_0^T\int_{ \Omega}  \widetilde\mu_\e (1-g_\e)\psi \dd x\dd t, \\
K_2 & =-\int_0^T\int_{ \Omega} \nabla\widetilde\phi_\e \cdot \nabla((1-g_\e)\psi) \dd x\dd t,\\
K_3 & =-\int_0^T\int_{ \Omega}  F'(\widetilde\phi_\e) (1-g_\e)\psi \dd x\dd t .
\ea
We proceed to establish uniform estimates for each integral $K_{i} \ (1\leq i\leq 3)$. 
By \eqref{uni-bound-tilde} and \eqref{est-g}, for $q^*$, $r_1$ and $r_2$ given in \eqref{q-r-1-r-2}, there holds
\ba\nn
|K_1|\leq C \|\widetilde\mu_\e\|_{L^2L^6} \|1-g_\e\|_{L^2} \|\psi\|_{L^2L^{r_2}}\leq C \e^{\frac{3\alpha-3}{2}}\|\psi\|_{L^2L^{r_2}},
\ea
\ba\nn
|K_2|\leq& C \|\nabla\widetilde\phi_\e\|_{L^2L^2} \|\nabla((1-g_\e)\psi)\|_{L^2L^2}\\
\leq& C \|\nabla\widetilde\phi_\e\|_{L^2L^2} (\|\nabla g_\e\|_{L^q}\|\psi\|_{L^2 L^{r_2}}+\|1- g_\e\|_{L^{q^*}}\|\nabla\psi\|_{L^2L^{r_1}})\\
\leq& C \e^{\sigma}\|\nabla\psi\|_{L^2L^{r_1}},
\ea
and
\ba\nn
|K_3|\leq C (\|\widetilde\phi_\e\|_{L^\infty L^6}^3+1)\|1- g_\e\|_{L^{q^*}}\|\psi\|_{L^2L^{r_1}}\leq C \e^{\sigma}\|\psi\|_{L^2L^{r_1}}.
\ea
Combining all above estimates, we arrive at the desired estimate.
\end{proof}

\section{Asymptotic limit}\label{Asymptotic limit}
In this section, we derive the limit equations for system \eqref{NSCH}--\eqref{boundary condition} in homogenized domain. By virtue of \eqref{est-GG} and assumptions \eqref{initial-u-phi}--\eqref{assumption-g}, we are able to pass the terms on the right-hand side of \eqref{momentum eq-1} to the limit as $\e\to 0$. However, the left-hand side of \eqref{momentum eq-1} contains a nonlinear convective term. To pass to the limit, we first investigate the convergence of this nonlinear convective term.

\subsection{Convergence of the nonlinear convective term}\label{Convergence of the nonlinear convective term}
Since the convective term $\dive(\widetilde\uu_\e\otimes\widetilde\uu_\e)$ 
is nonlinear, weak convergence of $\widetilde\vu_\e$ is not sufficient to pass these products to the homogenized limit. To overcome this difficulty, we employ Aubin--Lions type compactness arguments, see Lemma \ref{lem-AL}.

 By Proposition \ref{moment-equa},
for any ${\bm\varphi}\in C_c^\infty((0,T)\times \Omega;\mathbb{R}^3)$ with $\dive {\bm\varphi}=0$, there holds
\ba\nn
 \langle\pa_t \widetilde\uu_\e , \bm\varphi\rangle 
=&
\int_0^T\int_{\Omega}  \widetilde\uu_\e\otimes\widetilde\uu_\e:\nabla \bm\varphi \dd x \dd t
-\int_0^T\int_{\Omega} \nu (\widetilde\phi_\e) D(\widetilde\uu_\e):D(\bm\varphi)\dd x \dd t\\
&\quad-\int_0^T\int_{\Omega} \lambda_\e \widetilde\phi_\e \nabla \widetilde\mu_\e\cdot \bm\varphi\dd x \dd t
+\int_0^T\int_{\Omega} \mathbf g_\e\cdot \bm\varphi \dd x \dd t
+\l\GG_\varepsilon, \bvp \r,
\ea
with
\begin{equation}\nn
\left|\l\GG_\varepsilon, \bvp \r \right| \leq C\e^{\sigma} \lambda_\e^{1\over2}(\|\pa_t \bvp\|_{L^{4\over3} L^2}+ \|\nabla\bvp\|_{L^4 L^{r_1}}).
\end{equation}
Noticing that
\ba\nn
&\left|\int_0^T\int_{\Omega}  \widetilde\uu_\e\otimes\widetilde\uu_\e:\nabla \bm\varphi \dd x \dd t \right|
\leq \|\widetilde\vu_\e\|_{L^4 L^3} \|\widetilde\vu_\e\|_{L^2 L^6}\| \nabla\bm \varphi\|_{L^4 L^2}
\leq C \|\widetilde\vu_\e\|_{L^\infty L^2}^{1\over2} \|\widetilde\vu_\e\|_{L^2 H^1}^{3\over2} \|\bm \varphi\|_{L^4 H^1},\\
&\left| \int_0^T\int_{\Omega_\e} \nu (\widetilde\phi_\e) D(\widetilde\uu_\e):D(\bm\varphi)\dd x \dd t\right|
\leq \nu_2 \|D(\widetilde\vu_\e)\|_{L^2 L^2}\|D(\bm \varphi)\|_{L^2 L^2}
\leq C \|\widetilde\vu_\e\|_{L^2 H^1} \|\bm \varphi\|_{L^4 H^1},\\
&\left| \int_0^T\int_{\Omega_\e} \lambda_\e \widetilde\phi_\e \nabla \widetilde\mu_\e\cdot \bm\varphi\dd x \dd t\right| 
\leq  \lambda_\e\|\widetilde\phi_\e\|_{L^\infty L^6} \|\nabla \widetilde\mu_\e\|_{L^2 L^2} \|\bm \varphi\|_{L^2 L^3}
\leq C \lambda_\e\|\widetilde\phi_\e\|_{L^\infty H^1} \|\nabla \widetilde\mu_\e\|_{L^2 L^2} \|\bm \varphi\|_{L^4 H^1},\\
&\left| \int_0^T\int_{\Omega_\e} \mathbf g_\e\cdot \bm\varphi \dd x \dd t\right| 
\leq \|\mathbf g_\e\|_{L^2 L^2}\|\bm\varphi\|_{L^2 L^2}
\leq C\sqrt{\lambda_\e}  \|\mathbf g_0\|_{L^2 L^2} \|\bm \varphi\|_{L^4 H^1},
\ea
together with \eqref{uni-bound-tilde}, we can deduce
\ba\label{pat u varphi}
|\langle\pa_t \widetilde\uu_\e , \bm\varphi\rangle |\leq C\lambda_\e^{1\over2}(\|\bvp\|_{L^4 W^{1,r_1}}+\e^\sigma \|\pa_t \bvp\|_{L^{4\over3} L^2}),
\ea
where $\sigma>0$ and $r_1\in(2,3)$ are defined in Proposition \ref{moment-equa}. 
Estimate \eqref{pat u varphi} holds for all ${\bm\varphi}\in C_c^\infty((0,T)\times \Omega;\mathbb{R}^3)$ since $ \langle\pa_t \widetilde\uu_\e , \bm\varphi\rangle= \langle\pa_t \widetilde\uu_\e ,\mathbb{P} \bm\varphi\rangle$, where $\mathbb{P}$ is the Leray--Helmholtz projection operator. Thus we have the following decomposition
\ba\nn
\widetilde\vu_\e=\widetilde\vu_\e^{(1)}+\e^\sigma\widetilde\vu_\e^{(2)},
\ea
where $\pa_t \widetilde\vu_\e^{(1)}$ is uniformly bounded in $L^{4\over3}(0,T; W^{-1,r_1'}(\Omega;\mathbb{R}^3))$, and $\widetilde\vu_\e^{(2)}$ is uniformly bounded in $L^4(0,T;L^2(\Omega;\mathbb{R}^3))$.
By virtue of \eqref{uni-bound-tilde} and interpolation inequality, we deduce that $\widetilde\vu_\e$ is uniformly bounded in $L^4(0,T;L^3(\Omega;\mathbb{R}^3))$, and thus $\widetilde\vu_\e^{(1)}=\widetilde\vu_\e-\e^\sigma\widetilde\vu_\e^{(2)}$ is uniformly bounded in $L^4(0,T;L^2(\Omega;\mathbb{R}^3))$. 
Together with the fact that $\e^\sigma\widetilde\vu_\e^{(2)} \to 0$ strongly in $L^4(0,T;L^2(\Omega;\mathbb{R}^3))$,
up to a subsequence, we have
\be\nn
\widetilde\vu_\e^{(1)}\to \vu \text{ weakly in } L^4(0,T;L^2(\Omega;\mathbb{R}^3)).
\ee
Since $\widetilde\vu_\e$ is uniformly bounded in $L^2(0,T;H_0^{1}(\Omega;\mathbb{R}^3))$, there holds
\be\nn
\|\widetilde\vu_\e-\widetilde\vu_\e(\cdot+\xi,t)\|_{L^{2}L^{2}}\leq |\xi| \cdot \| \nabla\widetilde\vu_\e\|_{L^{2}L^{2}}\leq C|\xi| 
\to 0, \quad \text{as }|\xi|\to 0.
\ee
Applying Lemma \ref{lem-AL}, we can deduce
\be\nn
\widetilde\vu_\e^{(1)}\otimes\widetilde\vu_\e \to \vu\otimes\vu \text{ in } {\cal D'}((0,T)\times \Omega).
\ee
Since $\widetilde\vu_\e^{(2)}$ is uniformly bounded in $L^4(0,T;L^2(\Omega;\mathbb{R}^3))$, $\widetilde\vu_\e$ is uniformly bounded in $L^2(0,T;L^6(\Omega;\mathbb{R}^3))$ $\cap L^\infty(0,T;L^2(\Omega;\mathbb{R}^3))$,
direct computation yields
\be\nn
\e^\sigma\widetilde\vu_\e^{(2)}\otimes\widetilde\vu_\e \to {\bm 0}  \text{ strongly in }L^{4\over3}(0,T;L^{3\over2}(\Omega;\mathbb{R}^{3\times 3}))\cap L^{4}(0,T;L^{1}(\Omega;\mathbb{R}^{3\times 3})).
\ee
Thus we have
\be\nn
\widetilde\vu_\e\otimes\widetilde\vu_\e\to \vu\otimes\vu \text{ in } {\cal D'}((0,T)\times \Omega).
\ee
Combining the fact that $\widetilde\vu_\e$ is uniformly bounded in $L^4(0,T;L^3(\Omega;\mathbb{R}^3))$ and $L^2(0,T;L^6(\Omega;\mathbb{R}^3))$, we derive
\be\label{convective term}
\widetilde\vu_\e\otimes\widetilde\vu_\e\to \vu\otimes\vu \text{ weakly in } L^{4\over3}(0,T;L^{2}(\Omega;\mathbb{R}^{3\times 3})).
\ee

Taking identity matrix as a test function in \eqref{convective term} gives
\be\nn
\int_0^{T} \int_{\Omega} \widetilde\vu_\e\cdot\widetilde\vu_\e \dd x \dd t \to
\int_0^{T} \int_{\Omega} \vu\cdot\vu \dd x \dd t, 
\ee
which combined with \eqref{Convergence of u} gives
\be\label{strong convergence of u}
\widetilde\vu_\e \to \vu \text{ in } L^{2}(0,T;L^{2}(\Omega;\mathbb{R}^3)).
\ee

\subsection{Strong convergence of $\widetilde\phi_\e$}\label{Strong convergence of phi}
There are nonlinear terms involving $\widetilde\phi_\e$ in the left-hand side of the momentum equations \eqref{momentum eq-1} .
To deal with them, we will first show the strong convergence of $\widetilde\phi_\e$.

Following the proof of Proposition \ref{phase-field-equa}, by \eqref{Hevarphi}, for any $\varphi\in C_c^\infty((0,T)\times \Omega)$, we have
\ba\nn
\langle \pa_t\widetilde\phi_\e, \varphi \rangle=&
-\int_0^T\int_{\Omega} M(\widetilde\phi_\e) \nabla\widetilde\mu_\e\cdot\nabla \varphi \dd x \dd t
+\int_0^T\int_{\Omega}  \widetilde\phi_\e \widetilde\uu_\e\cdot\nabla \varphi \dd x \dd t
+\langle \pa_t\widetilde\phi_\e, (1-g_\e)\varphi \rangle\\
&\quad+\int_0^T\int_{\Omega} M(\widetilde\phi_\e) \nabla\widetilde\mu_\e\cdot\nabla((1-g_\e) \varphi )\dd x \dd t
-\int_0^T\int_{\Omega}  \widetilde\phi_\e \widetilde\uu_\e\cdot\nabla ((1-g_\e) \varphi ) \dd x \dd t.
\ea
Decompose $\widetilde\phi_\e$ into
\be\nn
\widetilde\phi_\e=\widetilde\phi_\e^{(1)}+\widetilde\phi_\e^{(2)},\quad \widetilde\phi_\e^{(1)}=g_\e \widetilde\phi_\e,\quad \widetilde\phi_\e^{(2)}=(1-g_\e) \widetilde\phi_\e.
\ee
Since $\langle\pa_t\widetilde\phi_\e^{(2)}, \varphi \rangle=\langle \pa_t\widetilde\phi_\e, (1-g_\e)\varphi \rangle$, 
we have
\ba\nn
\langle \pa_t\widetilde\phi_\e^{(1)}, \varphi \rangle=&
-\int_0^T\int_{\Omega} M(\widetilde\phi_\e) \nabla\widetilde\mu_\e\cdot\nabla \varphi \dd x \dd t
+\int_0^T\int_{\Omega}  \widetilde\phi_\e \widetilde\uu_\e\cdot\nabla \varphi \dd x \dd t\\
&\quad+\int_0^T\int_{\Omega} M(\widetilde\phi_\e) \nabla\widetilde\mu_\e\cdot\nabla((1-g_\e) \varphi )\dd x \dd t
-\int_0^T\int_{\Omega}  \widetilde\phi_\e \widetilde\uu_\e\cdot\nabla ((1-g_\e) \varphi ) \dd x \dd t.
\ea
By \eqref{uni-bound-tilde} and Lemmas \ref{Poincare inequality}, \ref{Korn inequality}, together with Sobolev embedding, we can deduce
\ba\nn
&\left| \int_0^T\int_{\Omega_\e} M(\widetilde\phi_\e) \nabla\widetilde\mu_\e\cdot\nabla \varphi \dd x \dd t\right| 
\leq M_2 \|\nabla\widetilde\mu_\e\|_{L^2 L^2} \|\nabla\varphi\|_{L^2 L^2} 
\leq C\|\widetilde\mu_\e\|_{L^2 H^1} \|\varphi\|_{L^2 H^1},\\
&\left| \int_0^T\int_{\Omega_\e}  \widetilde\phi_\e \widetilde\uu_\e\cdot\nabla \varphi \dd x \dd t\right| 
\leq  \|\widetilde\phi_\e\|_{L^\infty L^3} \|\widetilde\vu_\e\|_{L^2 L^6} \| \nabla \varphi\|_{L^2 L^2} 
\leq C \|\widetilde\phi_\e\|_{L^\infty H^1} \|\widetilde\vu_\e\|_{L^2 H^1} \| \varphi\|_{L^2 H^1}.
\ea
By virtue of \eqref{J23}, we obtain
\be\nn
|\langle \pa_t\widetilde\phi_\e^{(1)}, \varphi \rangle|
\leq C\| \varphi\|_{L^2 W^{1,r_1}} ,
\ee
which implies that $\pa_t \widetilde\phi_\e^{(1)}$ is uniformly bounded in $L^{2}(0,T; W^{-1,r_1'}(\Omega))$.

Since $\alpha>3$, we have
\be\nn
\gamma:=\frac{3\alpha-3}{2}-\alpha=\frac{\alpha-3}{2}>0.
\ee
Then we use \eqref{est-g}  to obtain
\ba\label{phi e 2}
\|\widetilde\phi_\e^{(2)}\|_{L^\infty H^1}&=\|(1-g_\e)\widetilde\phi_\e\|_{L^\infty H^1}\\
&\leq C (\|1-g_\e\|_{L^2}+\|\nabla g_\e\|_{L^2})(\|\widetilde\phi_\e\|_{L^\infty L^2}+\|\nabla\widetilde\phi_\e\|_{L^\infty L^2})\\
&\leq C\e^\gamma \|\widetilde\phi_\e\|_{L^\infty H^1}.
\ea
By \eqref{uni-bound-tilde}, we know that $\widetilde\phi_\e$ is uniformly bounded in $L^\infty(0,T; H^1(\Omega))$. Together with \eqref{phi e 2}, we can deduce that $\widetilde\phi_\e^{(1)}=\widetilde\phi_\e-\widetilde\phi_\e^{(2)}$ is uniformly bounded in $L^\infty(0,T; H^1(\Omega))$.

Recalling the fact that $\pa_t \widetilde\phi_\e^{(1)}$ is uniformly bounded in $L^{2}(0,T; W^{-1,r_1'}(\Omega))$,
and the compact embedding $H^1(\Omega)\hookrightarrow\hookrightarrow L^2(\Omega)\hookrightarrow W^{-1,r_1'}(\Omega)$, applying the Aubin--Lions--Simon compactness theorem (see \cite{Simon}), we obtain
\be\nn
\widetilde\phi_\e^{(1)} \to \phi  \ \mbox{strongly in}\ L^2(0,T; L^2(\Omega)).
\ee
Combining \eqref{uni-bound-tilde} and \eqref{phi e 2}, we can deduce
\be\nn
\widetilde\phi_\e^{(2)} \to 0 \ \mbox{strongly in}\ L^2(0,T; L^2(\Omega)),
\ee
and thus
\be\label{tilde phi strongly L2L2}
\widetilde\phi_\e \to \phi \ \mbox{strongly in}\ L^2(0,T; L^2(\Omega)).
\ee

\subsection{Homogenization process}\label{Homogenization process}
Once the convergence of the convective term and the strong convergence of $\widetilde \phi_\e$ are established, we can pass $\e\to 0$ to derive the limit system.

 Let us start with the momentum equations.
By Proposition \ref{moment-equa}, for ${\bm\varphi}\in C_c^\infty([0,T)\times \Omega;\mathbb{R}^3)$ with $\dive {\bm\varphi}=0$, there holds
\ba\label{Homo-G}
\l\GG_\varepsilon, \bvp \r=&
-\int_{0}^T\int_{\Omega}   \widetilde\uu_\e \cdot \pa_t \bm\varphi \dd x \dd t
-\int_0^T\int_{\Omega}  \widetilde\uu_\e\otimes\widetilde\uu_\e:\nabla \bm\varphi \dd x \dd t+
\int_0^T\int_{\Omega} \nu (\widetilde\phi_\e) D(\widetilde\uu_\e):D(\bm\varphi)\dd x \dd t\\ 
&\quad
+\int_0^T\int_{\Omega} \lambda_\e \widetilde\phi_\e \nabla \widetilde\mu_\e\cdot \bm\varphi\dd x \dd t
-\int_0^T\int_{\Omega} \mathbf g_\e\cdot \bm\varphi \dd x \dd t
-\int_{\Omega}\widetilde\uu_\e^0 \cdot \bm\varphi(0) \dd x, 
\ea
with
\be\nn
\left|\l\GG_\varepsilon, \bvp \r \right| \leq C\e^{\sigma} \lambda_\e^{1\over2}(\|\pa_t \bvp\|_{L^{4\over3} L^2}+ \|\nabla\bvp\|_{L^4 L^{r_1}}+ \|\bvp(0)\|_{L^{r_2}}).
\ee
By virtue of the Lipschitz continuity of $\nu$, together with \eqref{tilde phi strongly L2L2}, we can deduce
\be\label{nu-phi-strong}
\nu (\widetilde\phi_\e) \to \nu (\phi) \quad \text{strongly in } L^2(0,T;L^2(\Omega)).
\ee
Combining \eqref{assumption-lambda}--\eqref{assumption-g}, \eqref{Convergence of u}, \eqref{convective term}, \eqref{tilde phi strongly L2L2} and \eqref{nu-phi-strong}, letting $\e\to 0$ in \eqref{Homo-G}, we obtain
\ba\label{lim-momentum-eq}
0=&-\int_{0}^T\int_{\Omega}  \uu \cdot \pa_t \bm\varphi \dd x \dd t
-\int_0^T\int_{\Omega}  \uu\otimes\uu:\nabla \bm\varphi \dd x \dd t+
\int_0^T\int_{\Omega} \nu (\phi) D(\uu):D(\bm\varphi)\dd x \dd t\\ 
&\quad
+\int_0^T\int_{\Omega} \lambda \phi \nabla \mu\cdot \bm\varphi\dd x \dd t
-\int_0^T\int_{\Omega} \sqrt{\lambda}\mathbf g\cdot \bm\varphi \dd x \dd t
-\int_{\Omega}\sqrt{\lambda}\uu_0 \cdot \bm\varphi(0) \dd x. 
\ea

Next, we study the limit process of phase-field equation. By Proposition \ref{phase-field-equa}, for any $\varphi\in C_c^\infty([0,T)\times \Omega)$, there holds
\ba\label{Homo-H}
&-\int_0^T\int_{\Omega}  \widetilde\phi_\e\pa_t\varphi \dd x\dd t+
\int_0^T\int_{\Omega}  M(\widetilde\phi_\e)\nabla\widetilde\mu_\e\cdot\nabla \varphi \dd x \dd t\\
&\quad=\int_0^T\int_{\Omega}  \widetilde\phi_\e \widetilde\uu_\e\cdot\nabla \varphi \dd x \dd t
+\int_{\Omega}  \phi_\e^0\varphi(0) \dd x
+\l\HH_\varepsilon, \varphi \r,
\ea
with
\begin{equation}\nn
\left|\l\HH_\varepsilon, \varphi \r\right| \leq C\e^{\sigma}(\|\pa_t\varphi\|_{L^2 L^{3\over2}}+\|\nabla\varphi\|_{L^4 L^{r_1}}+ \|\varphi(0)\|_{L^{3}}).
\end{equation}
By virtue of the Lipschitz continuity of $M$, together with \eqref{tilde phi strongly L2L2}, we can deduce
\be\label{M-phi-strong}
M (\widetilde\phi_\e) \to M (\phi) \quad \text{strongly in } L^2(0,T;L^2(\Omega)).
\ee
Combining \eqref{initial-u-phi}, \eqref{Convergence of u}, \eqref{tilde phi strongly L2L2} and \eqref{M-phi-strong}, we send $\e\to 0$ in \eqref{Homo-H} to obtain
\ba \label{lim-phase-field-eq}
&-\int_0^T\int_{\Omega} \phi\pa_t\varphi \dd x\dd t+
\int_0^T\int_{\Omega} M(\phi) \nabla\mu\cdot\nabla \varphi \dd x \dd t
=\int_0^T\int_{\Omega}  \phi \uu\cdot\nabla \varphi \dd x \dd t
+\int_{\Omega}  \phi_0\varphi(0) \dd x.
\ea

Finally, we investigate the limit process of chemical-potential equation. By Proposition \ref{chemical-equa}, for any $\psi\in C_c^\infty((0,T)\times \Omega)$, there holds
\ba\label{Homo-K}
\int_0^T\int_{ \Omega}  \widetilde\mu_\e \psi \dd x \dd t
-\int_0^T\int_{ \Omega} \nabla\widetilde\phi_\e \cdot \nabla\psi \dd x\dd t
=\int_0^T\int_{ \Omega}  F'(\widetilde\phi_\e) \psi \dd x\dd t+\l\KK_\varepsilon, \psi \r,
\ea
with
\begin{equation}\nn
|\l\KK_\varepsilon, \psi \r| \leq C \e^{\sigma}\|\nabla\psi\|_{L^2L^{r_1}}.
\end{equation}
By \eqref{Convergence of u} and \eqref{tilde phi strongly L2L2}, one has
\ba\nn
\left|\int_0^T \int_{\Omega}  (F'(\widetilde \phi_\e)-F'( \phi))\psi \dd x \dd t \right|
&\leq C\|(\widetilde \phi_\e-\phi)(\widetilde \phi_\e^2+\widetilde \phi_\e \phi+\phi^2-1)\|_{ L^2 L^{\frac{6}{5}}} \|\psi\|_{L^2 L^6}\\
&\leq C\|\widetilde \phi_\e-\phi\|_{L^2 L^2} (\|\widetilde \phi_\e\|_{L^\infty L^6}^2+\|\phi\|_{L^\infty L^6}^2+1) \|\psi\|_{L^2 L^6}
\to 0.
\ea
By virtue of \eqref{Convergence of u}, we pass to the limit $\e\to 0$ in \eqref{Homo-K} to deduce
\ba\label{lim-chemical-eq}
\int_0^T\int_{ \Omega}  \mu \psi \dd x \dd t
-\int_0^T\int_{ \Omega} \nabla\phi \cdot \nabla\psi \dd x\dd t
=\int_0^T\int_{ \Omega}  F'(\phi) \psi \dd x\dd t. 
\ea

Combining \eqref{Convergence of u}, \eqref{strong convergence of u}, \eqref{tilde phi strongly L2L2}, \eqref{lim-momentum-eq}, \eqref{lim-phase-field-eq} and \eqref{lim-chemical-eq}, 
 we complete the proof of Theorem \ref{Main theorem}.
 
 \subsection{Case $\lambda=0$}\label{lambda0}
Finally, we study the case $\lambda_\e\to\lambda=0$ and prove Corollary \ref{corollary}. All above arguments remain valid in this case. Notably, \eqref{uni-bound-tilde} implies $\widetilde\uu_\e\to \bf{0}$ in $L^{2}(0,T;L^{2}(\Omega;\mathbb{R}^3))$. The limit phase-field equation reduces to \eqref{lim-phase-field-eq} with $\uu=\bf{0}$, while the limit chemical-potential equation \eqref{lim-chemical-eq} stays unchanged. However, every term in \eqref{lim-momentum-eq} vanishes identically. This observation motivates us to identify a suitable scaling that yields a nontrivial limiting velocity.
 
 When $\lambda_\e\to\lambda=0$, by virtue of \eqref{uni-bound-tilde}, we deduce that $\uu=\bf 0$, and thus the limit phase-field equation \eqref{lim-phase-field-eq} reads
 \ba \label{lim-phase-field-eq-coro}
&-\int_0^T\int_{\Omega} \phi\pa_t\varphi \dd x\dd t+
\int_0^T\int_{\Omega} M(\phi) \nabla\mu\cdot\nabla \varphi \dd x \dd t
=\int_{\Omega}  \phi_0\varphi(0) \dd x.
\ea
The limit chemical-potential equation \eqref{lim-chemical-eq} remains unchanged.

Under the scaling $\ww_\e=\frac{\uu_\e}{\sqrt{\lambda_\e}}$, we have
\be\nn
 \ww_\e=\bm{ 0} \text{ on } \pa\Omega_\e,\quad \text{and}\quad \dive \ww_\e=0 \text{ in } \Omega_\e.
 \ee
  The zero extension of $\ww_\e$ is $\widetilde\ww_\e=\frac{\widetilde\uu_\e}{\sqrt{\lambda_\e}}$. Estimates \eqref{uni-bound-tilde} implies
\ba\nn
\|\widetilde\ww_\e\|_{L^\infty L^2}+\|D(\widetilde\ww_\e)\|_{L^2 L^2}\leq C.
\ea
Up to extraction of a subsequence, the following weak convergences hold:
\ba \label{Convergence of u-coro}
&\widetilde\ww_\e \to \ww \ \mbox{weakly-(*) in}\ L^\infty(0,T; L^2(\Omega; \R^3)) \ \mbox{and weakly in}\ L^2(0,T; H^1_0(\Omega;\R^3)).
\ea
The divergence free condition $\dive \ww=0$ follows from $\dive \widetilde\ww_\e=0$. Moreover, estimate \eqref{pat u varphi} implies
\ba\label{pat w varphi}
|\langle\pa_t \widetilde\ww_\e , \bm\varphi\rangle |\leq C (\|\bvp\|_{L^4 W^{1,r_1}}+\e^\sigma \|\pa_t \bvp\|_{L^{4\over3} L^2}),
\ea
for all ${\bm\varphi}\in C_c^\infty((0,T)\times \Omega;\mathbb{R}^3)$.
Thus we have the decomposition
\ba\nn
\widetilde\ww_\e=\widetilde\ww_\e^{(1)}+\e^\sigma\widetilde\ww_\e^{(2)},
\ea
where $\pa_t \widetilde\ww_\e^{(1)}$ is uniformly bounded in $L^{4\over3}(0,T; W^{-1,r_1'}(\Omega;\mathbb{R}^3))$, and $\widetilde\ww_\e^{(2)}$ is uniformly bounded in $L^4(0,T;L^2(\Omega;\mathbb{R}^3))$.
Statements \eqref{pat u varphi}--\eqref{strong convergence of u}
hold true when we replace $\widetilde\vu_\e,\widetilde\vu_\e^{(1)},\widetilde\vu_\e^{(2)}$ by $\widetilde\ww_\e,\widetilde\ww_\e^{(1)},\widetilde\ww_\e^{(2)}$. As a result, we derive
\be\label{strong convergence of w}
\widetilde\ww_\e \to \ww \text{ in } L^{2}(0,T;L^{2}(\Omega)).
\ee
 
By Proposition \ref{moment-equa}, for any ${\bm\varphi}\in C_c^\infty([0,T)\times \Omega;\mathbb{R}^3)$ with $\dive {\bm\varphi}=0$, there holds
\ba\label{momentum eq-1-coro}
&-\int_{0}^T\int_{\Omega}  \widetilde\ww_\e\cdot \pa_t \bm\varphi \dd x \dd t
-\int_0^T\int_{\Omega}  \sqrt{\lambda_\e}  \widetilde\ww_\e\otimes\widetilde\ww_\e:\nabla \bm\varphi \dd x \dd t
+\int_0^T\int_{\Omega} \nu (\widetilde\phi_\e) D(\widetilde\ww_\e):D(\bm\varphi)\dd x \dd t\\
&\quad+\int_0^T\int_{\Omega} \sqrt{\lambda_\e} \widetilde\phi_\e \nabla \widetilde\mu_\e\cdot \bm\varphi\dd x \dd t
=\int_0^T\int_{\Omega} \frac{\mathbf g_\e}{\sqrt{\lambda_\e}}\cdot \bm\varphi \dd x \dd t
+\int_{\Omega}  \widetilde\ww_\e \cdot\bm\varphi (0)\dd x
+{\lambda_\e}^{-\frac{1}{2}} \l\GG_\varepsilon, \bvp \r, 
\ea
with
\begin{equation}\label{est-GG-coro}
\left|{\lambda_\e}^{-\frac{1}{2}} \l\GG_\varepsilon, \bvp \r \right| 
\leq C\e^{\sigma} (\|\pa_t \bvp\|_{L^{4\over3} L^2}+ \|\nabla\bvp\|_{L^4 L^{r_1}}+ \|\bvp(0)\|_{L^{r_2}}).
\end{equation}
Combining \eqref{assumption-lambda}--\eqref{assumption-g}, \eqref{Convergence of u}, \eqref{tilde phi strongly L2L2}, \eqref{Convergence of u-coro}, \eqref{strong convergence of w} and \eqref{est-GG-coro}, letting $\e\to 0$ in \eqref{momentum eq-1-coro}, we deduce
\ba\label{lim-momentum-eq-coro}
-\int_{0}^T\int_{\Omega}  \ww\cdot \pa_t \bm\varphi \dd x \dd t
+\int_0^T\int_{\Omega} \nu (\phi) D(\ww):D(\bm\varphi)\dd x \dd t
=\int_0^T\int_{\Omega} \mathbf g\cdot \bm\varphi \dd x \dd t
+\int_{\Omega} \vu_0 \cdot\bm\varphi (0)\dd x. 
\ea
Combining \eqref{Convergence of u}, \eqref{tilde phi strongly L2L2}, \eqref{lim-chemical-eq}--\eqref{Convergence of u-coro}, \eqref{strong convergence of w} and \eqref{lim-momentum-eq-coro}, we complete the proof of Corollary \ref{corollary}.

	\paragraph{Conflict of interest}
	The authors declare no conflict of interest in this paper.

\end{document}